\newtheorem{theorem}{Theorem}[section]
\newtheorem{lemma}{Lemma}[section]
\newtheorem{definition}{Definition}[section]
\newtheorem{example}{Example}[section]
\newtheorem{assumption}{Assumption}[section]
\newtheorem{remark}{Remark}[section]
\numberwithin{equation}{section}
\begin{document}% 以下为正文内容

	%\maketitle% 产生标题，没有它无法显示标题
	%%%%%%%%%%%%%%%%%%%%%%%%%%%%%%%%%%%%%%%%%%%%%%%%%%%%%%%%
	\lhead{}% 页眉左边设为空
	\chead{}% 页眉中间设为空
	\rhead{}% 页眉右边设为空
	\lfoot{}% 页脚左边设为空
	\cfoot{\thepage}% 页脚中间显示页码
	\rfoot{}% 页脚右边设为空
	%%%%%%%%%%%%%%%%%%%%%%%%%%%%%%%%%%%%%%%%%%%%%%%%%%%%%%%%
	\newcommand\del[1]{}
	\title{Distributed Online Economic Dispatch with Time-Varying Coupled Inequality Constraints}
	
	\author{Yingjie Zhou\thanks{School of Mathematical Sciences, East China Normal University, Shanghai 200241, China. Email: Zhouyingjie7@163.com.}\,\,\,  Xiaoqian Wang\thanks{Academy of Mathematics and Systems Science, Chinese Academy of Sciences, Beijing 100190, China. Email: xiaoqian.wang@amss.ac.cn. (Corresponding author)}  \,\,\, Tao Li\thanks{Key Laboratory of Management, Decision and Information Systems, Institute of Systems Science, Academy of Mathematics and Systems Science, Chinese Academy of Sciences, Beijing 100190, China; School of Mathematical Sciences, University of Chinese Academy of Sciences, Beijing 100149, China. Email: litao@amss.ac.cn.} }

    \onehalfspacing %%1.5倍行间距
    
	\maketitle

	\textbf{Abstract:}
	We investigate the distributed online economic dispatch problem for power systems with time-varying coupled inequality constraints. The problem is formulated as a distributed online optimization problem in a multi-agent system. At each time step, each agent only observes its own instantaneous objective function and local inequality constraints; agents make decisions online and cooperate to minimize the sum of the time-varying objectives while satisfying the global coupled constraints. To solve the problem, we propose an algorithm based on the primal-dual approach combined with constraint-tracking. Under appropriate assumptions that the objective and constraint functions are convex, their gradients are uniformly bounded, and the path length of the optimal solution sequence grows sublinearly, we analyze theoretical properties of the proposed algorithm and prove that both the dynamic regret and the constraint violation are sublinear with time horizon $T$. Finally, we evaluate the proposed algorithm on a time-varying economic dispatch problem in power systems using both synthetic data and Australian Energy Market data. The results demonstrate that the proposed algorithm performs effectively in terms of tracking performance, constraint satisfaction, and adaptation to time-varying disturbances, thereby providing a practical and theoretically well-supported solution for real‑time distributed economic dispatch.

	\vskip 2mm
	\textbf{Keywords:} Optimal decisions, Economic dispatch, Distributed online optimization, Time-varying coupled inequality constraints, Power systems.
	
	\vskip 2mm
	\section{Introduction}
	
	With the large-scale integration of distributed energy resources, renewable generation, and demand response, the operational characteristics of modern distribution grids and microgrids have become increasingly time-varying and uncertain. Generation outputs, loads, and market prices in power systems fluctuate rapidly, making economic dispatch methods based on static or offline models unable to meet the accuracy and robustness requirements of real‑time operation. At the same time, centralized dispatch requires collecting private information from all units and relies on high-bandwidth communication infrastructure, which faces practical limitations such as data privacy concerns, a single point of failure, and communication. Therefore, in scenarios where information is locally available and an online response to time-varying disturbances is required, distributed online economic dispatch methods provide a more practical and resilient solution.

    Distributed online economic dispatch problems are increasingly prevalent in real-world operations research (OR) applications — from distribution networks and microgrids to electric-vehicle charging, demand-response programs, and virtual power plants. These settings require scalable, privacy-preserving, and resilient decision-making that can react to fast-changing generation, load, and price signals without relying on a central controller. Distributed online economic dispatch commonly operates downstream of the predict‑then‑optimize (PTO) pipelines \citep{Elmachtoub2022,Devos2025,Petropoulos2022,Petropoulos2025,wang2025}. The distributed online method takes forecasts as inputs to the per‑time minimization and leverages predicted values to make optimal scheduling decisions for upcoming time steps. Distributed online dispatch is of substantial practical importance, as it enhances operational reliability and economic efficiency, mitigates constraint violations and the need for costly corrective actions, preserves data privacy among heterogeneous agents, and supports scalable participation in energy markets and ancillary service programs. These features enable more effective real-time decision-making in modern power systems and related OR applications.

	In the distributed online economic dispatch problem, we are interested in how each generating unit determines its output at each time step based solely on locally observable information, with the goal of minimizing operating costs while ensuring the supply–demand balance. As a motivating example, we consider a microgrid composed of $N$ generators, where the decision variable for generator $i$ at time $t$ is its real‑time power output $x_{i,t}$. At each time, the system cooperatively seeks to solve the following online minimization problem:
	\begin{equation}\label{online_dispatch}
		\begin{aligned}
			\min _{x_{i,t}\in \Omega_i } & \sum_{t=1}^{T}\sum_{i=1}^N a_{i,t} x_{i,t}^2 + b_{i,t}x_{i,t} + c_{i,t} - P_t x_{i,t},\\
			\text{s.t.}\,\, &  D_t - \sum_{i=1}^N x_{i,t} \leq 0 ,\,\, t=1, \dots, T,
		\end{aligned}
	\end{equation}
	where $\Omega_i \subset \mathbb{R}$ denotes the power output range of generator $i$, $P_t$ is the real-time electricity price, $a_{i,t},b_{i,t},c_{i,t}$ are the generation cost coefficients, and $D_t$ is the system load demand at time $t$. The term $P_t x_{i,t}$ in the objective function represents the revenue or market settlement at the real‑time price, which is subtracted from the generation cost to compute the net operating cost. The coupled inequality constraint $D_t - \sum_{i=1}^N x_{i,t} \leq 0$ ensures that the total generation reaches or exceeds the system load at each time step. Since the generation cost in (\ref{online_dispatch}) is increasing and strongly convex in $x_{i,t}$, any surplus generation increases the total cost. Hence, the optimal solution satisfies the coupled constraint at equality, i.e., $D_t - \sum_{i=1}^N x_{i,t} = 0$.
    Note that, at the decision-making stage, both $P_t$ and $D_t$ are time-varying and unobserved, and thus are replaced by their forecasts. Moreover, each unit can typically observe only its own cost and constraint information, without direct knowledge of the global demand or the private parameters of other units, making the problem highly challenging. Existing methods for problem (\ref{online_dispatch}) are largely heuristic and lack rigorous theoretical analysis \citep{Guo2007,cortes2018genetic,Sunwen2022}; furthermore, some studies consider illustrative examples that do not arise from practical engineering scenarios, limiting their practical applicability \citep{xiesiyu2022,Lijueyou2022}.

    \subsection{Related works}
	In fact, (\ref{online_dispatch}) can be formulated as a distributed online optimization problem. In recent years, driven by rapid advances in communication and computation technologies, together with the widespread deployment of low‑cost devices, distributed optimization has attracted substantial attention across a broad range of research areas. Many practical application scenarios, such as decision making, statistical learning, sensor networks, resource allocation, and formation control \citep{luo2023optimal,Abouarghoub2018,ChenT2017,MaW2018,Bedi2018}, can be naturally formulated as distributed optimization tasks over multi-agent networks. Unlike traditional centralized optimization methods, distributed optimization is performed on a network of agents, each of which has access only to its private information, while no central coordinator possesses complete system knowledge. Consequently, no single agent can solve the global optimization problem on its own; instead, the global objective can be achieved only through information exchange and coordinated decision-making among agents.
	
	In practical applications, agents often operate in dynamic environments \citep{ho2018primal,ho2019exploiting}, which has motivated extensive research on distributed online optimization. For unconstrained problems, a substantial body of work has been developed \citep{Akbari2015,Mateos2014}. When constraints are present, various online distributed algorithms have been proposed, including online distributed saddle‑point algorithms \citep{Kopple2015}, online distributed mirror descent \citep{Shahrampour2018}, online distributed dual-averaging \citep{Hosseini2016}, and zero-order online distributed projection-free algorithms \citep{Lu2023}, etc. 
	To avoid the computational and storage costs associated with projection-based methods, recent work has turned to distributed online convex optimization with long-term constraints \citep{Yuan2018,Yuan2022}, where constraints are not coupled but are required to be satisfied in a long-term sense.
	For strongly convex objectives, \cite{Yuan2018} proposed a primal–dual online algorithm, while \cite{Yuan2022} studied the general convex case.  \cite{Lee2017} proposed a distributed saddle‑point approach for problems involving multiple coupled constraints and derived regret bounds, but their results heavily rely on the strong assumption of bounded dual variables. To relax this restriction, \cite{Lixiuxian2020} improved the method of \cite{Lee2017} by removing the boundedness assumption on the dual variables. Nevertheless, the method presents somewhat worse performance in terms of regret and constraint violation. For time-varying but uncoupled constraints, \cite{Yi2025} derived improved theoretical bounds on static regret and constraint violation for distributed online convex optimization, and in particular achieved tighter network cumulative constraint violation bounds under Slater's condition for the first time.
	Additional studies on time-varying inequality constraints include \cite{Yixinlei20202,Du2024}.

    Compared with other types of constraints, coupled constraints, which involve multiple decision variables or agents, are particularly common in many practical applications, such as plug-in electric vehicle (PEV) charging scheduling \citep{Vujanic2016,LiJ2018} and wireless network routing under channel uncertainty \citep{Lee2017}. However, distributed online convex optimization problems with coupled constraints have not yet been studied systematically and in depth in the existing literature.
	For distributed online optimization with time-varying coupled inequality constraints, \cite{Lijueyou2022} proposed a distributed primal-dual online algorithm with both gradient-feedback and gradient-free variants, and established sublinear bounds on static regret and constraint violation. However, their method requires stepsize parameters that depend on prior knowledge of the gradient bound. \cite{Luk2023} developed a distributed online algorithm for problems with coupled inequality constraints under the assumption of strong pseudo‑convexity, proving sublinear growth of dynamic regret and constraint violation under certain conditions; yet, this method requires both Slater's condition and bounded Lagrange multipliers due to the strong pseudo‑convexity assumption. More recently, \cite{wangcong2024} examined distributed online time-varying coupled‑constraint convex optimization with feedback delays, and proposed a delay-compensated distributed primal-dual push-sum algorithm, proving that the expected regret and constraint violation grow sublinearly.
	
	For the distributed online economic dispatch problem with time-varying coupled inequality constraints, the main challenges are as follows. First, the strong coupling of the inequality constraints. At each time step, every agent can observe only its own local objective function, local constraint functions, and local feasible set, making it difficult to satisfy the global constraints in a distributed manner. Second, existing theoretical analyses often rely on overly restrictive assumptions. As in the literature mentioned above, regret analyses often rely on strong assumptions such as bounded Lagrange multipliers and Slater's condition. Third, some algorithms require prior knowledge (for example, a uniform upper bound on the gradients of the objective functions over all time), which is difficult to obtain before the algorithm starts.

    \subsection{Our contributions}

    In this paper, we consider a general distributed online optimization problem with time-varying inequality constraints, where at each time step each agent has access only to its local objective and constraint functions, and the agents jointly minimize the sum of all local objective functions subject to coupled inequality constraints. Problem \eqref{online_dispatch} can be considered as a special case of this general setting. The main contributions of this paper are as follows. 
    \begin{itemize}
    \item  We propose a distributed online algorithm for economic dispatch based on the primal–dual framework. The proposed algorithm decouples the coupled inequality constraints by assigning each node a local copy of the Lagrange multipliers and enforcing agreement via consensus on these multipliers. Since each node requires information about the global constraint function, we extend the gradient tracking mechanism to the constraint functions so that each node can track the sum of all constraints using historical information. To avoid assuming bounded Lagrange multipliers, we introduce a regularization term into the original Lagrangian, whose coefficient effectively controls an upper bound on the multipliers.
    \item Under standard boundedness assumptions on objective and constraint functions and their gradients, we prove that both the dynamic regret and the constraint violation of the proposed algorithm grow sublinearly. Specifically, let $0<\kappa_1<\min\{2\kappa_2, 1-2\kappa_2\}$, with $\kappa_2\in (0,\frac{1}{2})$, and let $P_T$ denote the path length of the optimal solution sequence. Then, the dynamic regret admits the upper bound $\mathcal{O}\left(T^{\max\{1-\kappa_1, 1-2\kappa_2+\kappa_1, 2\kappa_2+\kappa_1\}} +T^{\max\{\kappa_1, \kappa_2\}}P_T \right)$ while the constraint violation is bounded by $\mathcal{O}\left(T^{\max\left\{1-\frac{\kappa_2}{2}, \frac{1}{2}+\frac{\kappa_1}{2}\right\}} +T^{\max\{\frac{1}{2}, \frac{1}{2}-\frac{\kappa_2}{2}+ \frac{\kappa_1}{2}\}} \sqrt{P_T} \right)$.
    \item The proposed method relaxes a common assumption in the existing literature by eliminating the need to presuppose bounded Lagrange multipliers through the use of a regularized Lagrangian \citep{Luk2023,Ayslender2000}. Moreover, it requires no prior knowledge of parameters of the global problem, thereby enhancing its practical applicability. The proposed algorithm updates decisions for the next time step using only information from the current and immediately preceding time steps, without storing all the past information, which results in low memory and computational overhead. Finally, numerical experiments  using both synthetic data and Australian Energy Market data demonstrate the practical effectiveness of the proposed approach and corroborate the theoretical performance guarantees.
    \end{itemize}

    \subsection{Brief discussion}
    In addition to the specific formulation in (\ref{online_dispatch}), the problem setting and the proposed distributed online algorithm studied in this paper are applicable to a broad range of practical scenarios. Representative examples include PEV charging scheduling problem \citep{Vujanic2016,LiJ2018}, online linear regression problem \citep{Yi2025}, and other related distributed optimization tasks.

    \begin{example}
    PEV problem can be formulated as
    \begin{equation}\nonumber 
			\begin{aligned}
				\min _{x_i\in \Omega_i } &\sum_{t=1}^{T}\sum_{i=1}^N \frac{a_{i,t}}{2}\|x_i\|^2 +\beta_{i,t}^{\top} x_i,\\
				\text{s.t.}\,\, &  \sum_{i=1}^N A_{i,t} x_i -d_{i,t} \leq \mathbf{0}_n, \,\,  t=1,\dots,T,
			\end{aligned}
    \end{equation}
    where $x_{i,t}$ is the charging rate of vehicle $i$, and $a_{i,t}, \beta_{i,t}, A_{i,t}, d_{i,t}$ are time‑varying, privately held coefficients.
    \end{example}

    \begin{example}
    The online linear regression problem with time‑varying nonlinear inequality constraints is given as
    \begin{equation}\nonumber
    \begin{aligned}
    \min_{x\in\mathcal{X}} &\quad \sum_{t=1}^{T}\sum_{i=1}^n \tfrac{1}{2}\|H_{i,t} x - h_{i,t}\|^2,\\
    \text{s.t.} &\quad b_{i,t} - \log\big(1+\|x\|^2\big) \le 0,\qquad \forall i\in[n],\ \forall t\in[T],
    \end{aligned}
    \end{equation}
    where $x\in\mathbb{R}^p$ is the common decision vector, $H_{i,t}\in\mathbb{R}^{d_i\times p}$, $h_{i,t}\in\mathbb{R}^{d_i}$, $b_{i,t}\in\mathbb{R}$, and $d_i\in\mathbb{N}_+$ represents the output/response dimension of node $i$.
    \end{example}
    
    Similar formulations also arise in energy storage management, renewable energy integration, and resource allocation problems \citep{wu2005,Nasrabadi2012,ahn2024data}. These settings typically involve local decision variables, shared coupling constraints, and time-varying parameters, making them well suited to distributed online optimization approaches.

    \subsection{Organization and notation}
	
    The remainder of this paper is organized as follows. Section 2 presents preliminaries and the problem formulation. Section 3 introduces the design of the proposed algorithm. Section 4 analyzes a theoretical analysis of the algorithm's dynamic regret and constraint violation. Section 5 illustrates the algorithm's performance through numerical experiments using both real and simulated datasets. Finally, Section 6 concludes the paper.
	
	$\mathbb{R}$ and $\mathbb{Z}$ denote the sets of real and integer numbers, respectively; $\mathbb{R}^n$ is the $n$-dimensional Euclidean space. For a vector or matrix $X$, $\Vert X \Vert$, $\Vert X \Vert_1$, $\Vert X \Vert_{\infty}$ denote its 2-norm, 1-norm, and infinity-norm, respectively; $X^{\top}$ denotes the transpose of $X$; $[X]_i$ denotes the $i$-th row if $X$ is a matrix or the $i$-th component if $X$ is a vector. $\nabla f$ denotes the gradient of the function $f$. $\textbf{1}$ denotes a column vector of appropriate dimension with all entries equal to $1$; $I$ denotes the identity matrix of appropriate dimension;  $\textbf{0}_n$ denotes the $n$‑dimensional zero vector. For $x \in \mathbb{R}^n,\, [x]_+= \max\{\textbf{0}_n, x \}$, where $\max$ is taken element-wise. For a closed convex set $\Omega$, $P_{\Omega}(x)$ is the projection of $x$ on $\Omega$, i.e,, $P_{\Omega}(x)= \arg\min_{y\in \Omega} \|x-y\|^2$.

	\section{Preliminaries and problem formulation }
	
	This section presents the preliminaries and then formulates the problem of interest.

	\subsection{Graph theory}
	
	Let the communication graph be $G(t)=\{V,E(t),A(t)\}$, $t=1, 2,\dots, T$, where $V=\{1,2,\dots,$ $N\}$ is the set of nodes and $E(t)$ is the edge set. An element of $E(t)$ is an ordered pair $(j,i)$ with $j,i\in V$, and $(j,i)\in E(t)$ denotes node $j$ can directly transmit information to node $i$. The in- and out-neighborhoods of node $i$ at time $t$ are denoted by $N_i^{in}(t)=\{j|(j,i)\in E(t)\}\cup \{i\}$ and $N_i^{out}(t)=\{j|(i,j)\in E(t)\}\cup \{i\}$, respectively. The matrix $A(t)=[a_{ij}(t)]_{N \times N}$ is called the weighted adjacency matrix of $G(t)$. Its entries satisfy $a<a_{ij}(t) <1$ if $(j,i)\in E(t)$, for some constant $0<a<1$, and $a_{ij}(t) = 0$ otherwise. If $A(t)$ is symmetric, then $G(t)$ is an undirected graph. Moreover, if $\boldsymbol{1}^{\top} A(t)=\boldsymbol{1}^{\top}$, and $A^{\top}(t) \boldsymbol{1}= \boldsymbol{1}$, then $A(t)$ is a doubly stochastic matrix and $G(t)$ is a balanced graph.
    
	For a fixed graph $G=\{V,E,A\}$, $G$ is strongly connected if, for any two vertices $i,j\in V$, there exists a sequence of nodes $k_1,k_2,\dots,k_m\in V$ such that $(i,k_1),(k_1,k_2),\dots,(k_m,j)\in E$. 
	
    For a time-varying graph $G(t)$, the $B$-edge set is defined as $E_B(t)=\cup_{k=(t-1)B+1}^{tB} E(k)$, where $B \geq 1$ is an integer. For any $t\geq 1$, if $\{V, E_B(t)\}$ is strongly connected, then $G(t)$ is $B$-strongly connected.

	\subsection{Problem formulation}

    Generally, we consider a multi-agent system with $N$ nodes, where each agent only knows its own $f_{i,t}$ and $g_{i,t}$, and communicates with its neighbors over a directed time-varying network $G(t)$. At each time $t$, the agents cooperatively solve the following optimization problem:
		\begin{equation}\label{constarinedproblem}
		\begin{aligned}
		\min _{x_i\in \Omega_i } f_t(x)= &\sum_{i=1}^N f_{i,t}(x_i),\\
			\text{s.t.}\,\, g_t(x)= &  \sum_{i=1}^N g_{i,t}(x_i) \leq \mathbf{0}_n, \,\,  t=1,\dots,T,
		\end{aligned}
	\end{equation}
	where $f_{i,t}: \Omega_i  \rightarrow \mathbb{R}$ is the local objective function of node $i$ at time $t$, $g_{i,t}: \Omega_i  \rightarrow \mathbb{R}^n$ is the local constraint function of node $i$ at time $t$, $x_i \in \Omega_i \in \mathbb{R}^{d_i}$ is the decision variable of node $i$, $d_{i}$ denotes the dimension of the decision variable of node $i$, $x=(x_1^{\top}, \dots, x_N^{\top})^{\top} \in \mathbb{R}^d$,
	$\sum_{i=1}^N d_i= d$, and $T$ is the time horizon. At each time $t$, node $i$ only knows $f_{i,t}$, $g_{i,t}$  and $\Omega_i$. Let $\Omega = \Omega_1 \times \dots \times \Omega_N$. The feasible set of problem (\ref{constarinedproblem}) at time $t$ is $\bar{\Omega}_t=\{x \in \Omega | g_t(x) \leq \mathbf{0}_n\}$. 

    \begin{remark}
    The formulation \eqref{constarinedproblem} is general and accommodates time-varying coupled inequality constraints of the form $g_t(x)=\sum_{i=1}^N g_{i,t}(x_i)\le \mathbf{0}_n$ together with local set constraints.
    (i) If the inequality constraints are decoupled, i.e., $g_{i,t}(x_i) \leq 0, \, i=1,\dots, N$, then the optimization problem can be solved independently by each agent without any information exchange. This corresponds to a trivial special case.
    (ii) If the constraints are time-invariant, i.e., $g_{t}(x)= g(x)$, then the problem reduces to the well‑studied setting of distributed (online) optimization with static coupling constraints. However, real-world operating environments are typically time-varying. Accounting for time-varying constraints is therefore more realistic and also more technically challenging, as it requires algorithms that can adapt to nonstationarity and provide dynamic performance guarantees. (iii) When the inequality constraints become linear, we can prove the equivalence of their solution to that of the equality constraints. Thus, under appropriate conditions, our problem can cover scenarios involving equality constraints.
    \end{remark}

    For problem (\ref{constarinedproblem}), we give the following assumptions.
	
	\begin{assumption}\label{AS4.1}
		For any $t=1,\dots,T$, $G(t)$ is $B$-strongly connected and its weight matrix $A(t)$ is doubly stochastic.
	\end{assumption}
	
	\begin{assumption}\label{AS4.2}
		The set $\Omega \subseteq \mathbb{R}^d$ is a bounded, closed, and convex set, i.e., there exists a constant $\eta>0$ such that $\|x\| \leq \eta$, $\forall x\in \Omega$.
	\end{assumption}
	
	\begin{assumption}\label{AS4.3}
		For all $t=1,\dots,T$ and $i=1,\dots,N$, $f_{i,t}(x)$ and $g_{i,t}(x)$ are convex and differentiable with respect to $x$, and there exist constants $C_1,C_2>0$ such that for any $x_i \in \Omega_i$, $|f_{i,t}(x_i)|\leq C_1$, $\|g_{i,t}(x_i)\|\leq C_1$, $\|\nabla f_{i,t}(x_i)\|\leq C_2$, $\|\nabla g_{i,t}(x_i)\|\leq C_2$, where $\nabla g_{i,t}(x_i)$ denotes the $n \times d_i$ Jacobian matrix.
	\end{assumption}
	
	Assumptions \ref{AS4.2}-\ref{AS4.3} are standard in the literature on distributed optimization \citep{zhangy2019, zhouyingjie2024}. Since $f_{i,t}(x)$ and $g_{i,t}(x)$ are differentiable on a bounded, closed domain, they are necessarily bounded on the domain. Compared with assuming $L$-smoothness of the gradients, we only assume bounded gradients, which is a weaker condition.

    \begin{remark}
    Assumption \ref{AS4.1} ensures sufficient information mixing so that local quantities average out and consensus errors can be bounded. Assumption \ref{AS4.2} guarantees well‑defined projections and bounded iterates. Assumption \ref{AS4.3} ensures any local optimum is global. As an illustrative example, consider problem (\ref{online_dispatch}), Assumption \ref{AS4.1} implies that information can be exchanged among generators and that the communication network contains no isolated nodes. Assumption \ref{AS4.2} corresponds to the practical limitation on generation capacities. The local objective and constraint functions satisfy Assumption \ref{AS4.3}.
    \end{remark}

	At time $t$, node $i$ can only observe $f_{i,t}$ and $g_{i,t}$, to solve the global optimization task, nodes must communicate via the network $G(t)$. In distributed online convex optimization, each node uses its local information and the information received from its neighbors to make a local prediction $x_{i,t+1}$ of the optimal solution $x^*_{i,t+1}$ at time $t+1$, where $x_{i,t+1}$ is the decision variable of node $i$ at time $t+1$. The performance of the algorithm is evaluated in terms of dynamic regret, which is defined as follows.
	\begin{definition}\label{dynamci_regret} \citep{Shahrampour2018}
		If $\{x_{i,t}, t=1,...,T, i=1,...,N \}$ are the decision variables generated by a given distributed online optimization algorithm, then the algorithm's dynamic regret is defined as
		$$Reg_T^D \triangleq \sum_{t=1}^{T}\sum_{i=1}^N\left(f_{i,t}\left(x_{i,t}\right)-f_{i,t}\left(x_{i,t}^*\right)\right),$$
		where $x_t^*=\arg\min_{x\in \bar{\Omega}_t}f_t\left(x\right)$.
	\end{definition}
	
	The dynamic regret depends not only on the running time $T$ but also on the variation path of the algorithm's optimal solutions, which we denote by $P_T=\sum_{t=2}^{T} \sum_{i=1}^N\| x_{i,t}^*-x_{i,t-1}^*\|.$ Since problem (\ref{constarinedproblem}) considers time-varying coupled inequality constraints, it is also important to study the algorithm's constraint violation, which is defined as follows.
	\begin{definition}\label{violation} \citep{Lijueyou2022,Lixiuxian2020}
		If $\{x_{i,t}, t=1,...,T, i=1,...,N \}$ are the decision variables generated by a given distributed online optimization algorithm, then the algorithm's constraint violation is defined as
		$$Vio_T= \left\|\left[\sum_{t=1}^{T} g_t(x) \right]_{+} \right\|.$$
	\end{definition}
	The goal of this paper is to design an appropriate algorithm such that both the dynamic regret and the constraint violation grow sublinearly, i.e., $\lim_{T\rightarrow \infty} Reg_T^D/T=0$ and $\lim_{T\rightarrow \infty} Vio_T/T=0$.
	
	\section{Algorithm design}
	
	Firstly, we present the centralized primal-dual method for solving problem (\ref{constarinedproblem}). At each time $t$, the Lagrangian function with a regularization term is defined as
	\begin{equation}\label{regularized_LF}
		L_t(x,\lambda)=\sum_{i=1}^N f_{i,t}(x_i) +\lambda^{\top} \sum_{i=1}^N g_{i,t}(x_i)-\frac{\gamma_t}{2}\|\lambda\|^2,
	\end{equation}
	where $\lambda \in \mathbb{R}^n_+$ is the dual variable and $\gamma_t>0$ is the regularization parameter which is assumed here to be non-increasing over time. Compared with the standard Lagrangian function, (\ref{regularized_LF}) contains an additional regularization term to prevent the dual variable from growing unbounded. We aim to find a saddle point of $L_t(x, \lambda)$, i.e., to solve the problem $\max_{\lambda \geq 0}\min_{x\in \Omega}L_t(x, \lambda)$. Using the centralized online primal-dual method, the updates are given as follows:
	\begin{align}
		x_{i,t+1}&= P_{\Omega_i}[x_{i,t} - \alpha_t\nabla_{x_i}L_t(x_t,\lambda_t)], \,\, i=1,\dots, N, \label{centra_al1}\\
		\lambda_{t+1}& = [\lambda_t + \alpha_t \nabla_{\lambda}L_t(x_t, \lambda_t)]_+,  \label{centra_al2}
	\end{align}
	where $\alpha_t>0$ is the step size. Note that $\nabla_{x_i}L_t(x_t,\lambda_t)=\nabla f_{i,t}(x_{i,t}) + \nabla g_{i,t}(x_{i,t})^{\top}\lambda_t$, $\nabla_{\lambda}L_t(x_t, \lambda_t)=\sum_{i=1}^N g_{i,t}(x_i) -\gamma_t \lambda_t$.
    
    In (\ref{centra_al1})-(\ref{centra_al2}), each agent requires access to global information when updating its decision, which is generally unavailable in a distributed setting. To enable distributed implementation, let $\lambda_{i,t}$ denote the node $i$'s local estimate of the dual variable at time $t$. Then $\lambda_{i,t+1}$ is updated as 
	$$\lambda_{i,t+1}= \left[ \lambda_{i,t} +\alpha_t(g_{i,t}(x_{i,t})-\gamma_t \lambda_{i,t}) \right]_+.$$
	To make each local $\lambda_{i,t}$ a more accurate estimate of the global $\lambda_t$, each node needs to estimate the aggregate constraint $\sum_{i=1}^N g_{i,t}(x_{i,t})$. Motivated by gradient-tracking algorithms \citep{pushi2021,sun2022}, where each node uses its local gradient to estimate the global gradient, we extend this idea to allow each node to obtain an estimate of $\sum_{i=1}^N g_{i,t}(x_{i,t})$.
    
    The resulting distributed online optimization algorithm for economic dispatch with coupled inequality constraints is summarized below. 
    
    At each time $t$, agent $i$ first exchanges its local dual estimate $\lambda_{i,t}$ and and constraint-tracking variable $y_{i,t}$ with its neighbors through the weighted matrix $A(t)$, and computes the local averages $\mu_{i,t}$ and $z_{i,t}$. Provided that the initialization satisfies $y_{i,1}= Ng_{i,1}(x_{i,1})$ for all $i=1,\dots,N$, each agent can obtain an accurate estimate of the aggregate constraint over time. Notably, only the dual and constraint-related variables $\lambda_{i,t}$ and $y_{i,t}$ are exchanged, while the primal decision variables remain private, thereby preserving agent privacy and ensuring a fully distributed implementation.
    
    Using these local estimates, agent $i$ updates its decision via a projected gradient step on the local Lagrangian
    $$x_{i,t+1}=P_{\Omega_i}\big[x_{i,t}-\alpha_t\big(\nabla f_{i,t}(x_{i,t})+\nabla g_{i,t}(x_{i,t})^\top \mu_{i,t}\big)\big].$$
    The dual variable is then updated through a projected ascent step,
    $$\lambda_{i,t+1}=\big[\mu_{i,t}+\alpha_t\big(z_{i,t}-\gamma_t\mu_{i,t}\big)\big]_+,$$
    followed by an update of the constraint-tracking variable by injecting the local constraint increment 
    $$y_{i,t+1}=z_{i,t}+N\big(g_{i,t}(x_{i,t+1})-g_{i,t}(x_{i,t})\big).$$
    
    The complete procedure of the proposed algorithm is presented in Algorithm \ref{algo2}.
	
		\begin{algorithm}[H]
		\begin{algorithmic}[1]%每行显示行号
			\caption{Distributed online economic dispatch algorithm with coupled inequality constraints.} \label{algo2}
			\State Input: non-incresing step size $\alpha_t$ and regularization parameter $\gamma_t$, initial value $x_{i,1}\in \Omega_i $,  $\lambda_{i,1}=\mathbf{0}_n$, $y_{i,1}=Ng_{i,1}(x_{i,1})$, $i=1,\dots,N$, weighted matrix $A(t)$.
			\For{$t = 1, 2, \dots, T$}
			\For{$i = 1, 2, \dots, N$}
			\State $\mu_{i,t}= \sum_{j=1}^N a_{ij}(t)\lambda_{j,t}$
			\State $z_{i,t} =\sum_{j=1}^N a_{ij}(t) y_{j,t}$
			\State $x_{i,t+1} = P_{\Omega_i}\left[x_{i,t}- \alpha_t\left(\nabla f_{i,t}(x_{i,t}) + \nabla g_{i,t}(x_{i,t})^{\top}\mu_{i,t} \right) \right]$
			\State $\lambda_{i,t+1}= \left[\mu_{i,t}+ \alpha_t\left(z_{i,t} -\gamma_t\mu_{i,t}\right) \right]_+$
			\State $y_{i,t+1} = z_{i,t}+N[g_{i,t+1}(x_{i,t+1})- g_{i,t}(x_{i,t})]$
			\EndFor
			\EndFor
			\State Output: $\{x_{t}^i\}_{i=1}^N$
		\end{algorithmic}
	\end{algorithm}

    \begin{remark}
    As shown in Algorithm \ref{algo2}, the decision updates rely only on information from the current and immediately preceding time steps and do not require storing the full history. Consequently, the proposed algorithm offers substantial gains in computational efficiency and memory usage.
    \end{remark}

	\section{Theoretical analysis}
	Let $\bar{\lambda}_t =\frac{1}{N}\sum_{i=1}^N \lambda_{i,t}$, $\bar{y}_t=\frac{1}{N} \sum_{i=1}^N y_{i,t}$, $x_t=[x_{1,t}^{\top}, \dots, x_{N,t}^{\top}]^{\top}$. For any $t \geq s \geq 1$, the state transition matrix is defined as
	$$
	\Phi(t, s)=\left\{\begin{array}{l}
		A(t-1) \cdots A(s+1) A(s), \text { if }\,\, t>s, \\
		I_N, \text { if }\,\, t=s.
	\end{array}\right.
	$$
	We first give the following lemmas.
	\begin{lemma}\label{gt_tracking}
		Suppose Assumption \ref{AS4.1} holds. For the sequences $\{\mu_{i,t}\}$ and $\{z_{i,t}\}$ generated by Algorithm \ref{algo2}, $t = 1, \dots, T$, $i=1,\dots, N$, we have   
		\begin{align}
			\bar{\lambda}_t &= \frac{1}{N}\sum_{i=1}^N\mu_{i,t}, \label{lambda_ave}\\
			\bar{y}_t &= \frac{1}{N} \sum_{i=1}^N z_{i,t} =\sum_{i=1}^N g_{i,t}(x_{i,t}). \label{yt_ave}
		\end{align}
	\end{lemma}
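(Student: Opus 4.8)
\textbf{Proof plan for Lemma \ref{gt_tracking}.}

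The plan is to establish the two identities by induction on $t$, exploiting the doubly stochastic structure of $A(t)$ from Assumption \ref{AS4.1}. For \eqref{lambda_ave}, the key observation is that averaging the consensus step $\mu_{i,t}=\sum_{j=1}^N a_{ij}(t)\lambda_{j,t}$ over $i$ and using column-stochasticity $\sum_{i=1}^N a_{ij}(t)=1$ gives $\frac{1}{N}\sum_{i=1}^N \mu_{i,t}=\frac{1}{N}\sum_{j=1}^N\big(\sum_{i=1}^N a_{ij}(t)\big)\lambda_{j,t}=\frac{1}{N}\sum_{j=1}^N \lambda_{j,t}=\bar{\lambda}_t$. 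This identity holds for every $t$ with no induction needed, purely from the update rule in line 4 of Algorithm \ref{algo2}.

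For \eqref{yt_ave}, I would first show the easy half, $\bar{y}_t=\frac{1}{N}\sum_{i=1}^N z_{i,t}$, by the same column-stochasticity argument applied to $z_{i,t}=\sum_{j=1}^N a_{ij}(t)y_{j,t}$. The substantive part is the second equality $\bar{y}_t=\sum_{i=1}^N g_{i,t}(x_{i,t})$, which I prove by induction on $t$. The base case $t=1$ follows from the initialization $y_{i,1}=Ng_{i,1}(x_{i,1})$, so $\bar{y}_1=\frac{1}{N}\sum_{i=1}^N Ng_{i,1}(x_{i,1})=\sum_{i=1}^N g_{i,1}(x_{i,1})$. For the inductive step, I average the tracking update $y_{i,t+1}=z_{i,t}+N\big(g_{i,t+1}(x_{i,t+1})-g_{i,t}(x_{i,t})\big)$ over $i$:
\begin{equation}\nonumber
\bar{y}_{t+1}=\frac{1}{N}\sum_{i=1}^N z_{i,t}+\sum_{i=1}^N\big(g_{i,t+1}(x_{i,t+1})-g_{i,t}(x_{i,t})\big)=\bar{y}_t+\sum_{i=1}^N g_{i,t+1}(x_{i,t+1})-\sum_{i=1}^N g_{i,t}(x_{i,t}).
\end{equation}
By the induction hypothesis $\bar{y}_t=\sum_{i=1}^N g_{i,t}(x_{i,t})$, the last two terms cancel the middle term, leaving $\bar{y}_{t+1}=\sum_{i=1}^N g_{i,t+1}(x_{i,t+1})$, which closes the induction.

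I do not anticipate a genuine obstacle here; the lemma is a standard telescoping/conservation property of gradient-tracking (here constraint-tracking) schemes. The only point requiring a modicum of care is making sure the two uses of stochasticity are applied in the right direction — the averaging identities need $\sum_i a_{ij}(t)=1$ (column sums), which is exactly the condition $\mathbf{1}^\top A(t)=\mathbf{1}^\top$ in Assumption \ref{AS4.1} — and keeping the index bookkeeping in the telescoping step consistent so that the $g_{i,t}(x_{i,t})$ terms cancel exactly rather than leaving a residual. Since Algorithm \ref{algo2} uses $g_{i,t+1}(x_{i,t+1})-g_{i,t}(x_{i,t})$ (the same time index on primal and constraint function within each bracket), the cancellation is clean.
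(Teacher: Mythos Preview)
Your proposal is correct and matches the paper's own proof essentially line for line: the paper uses the same column-stochasticity argument for \eqref{lambda_ave} and the first half of \eqref{yt_ave}, and for the second half derives the identical recursion $\bar{y}_t=\bar{y}_{t-1}+\sum_i g_{i,t}(x_{i,t})-\sum_i g_{i,t-1}(x_{i,t-1})$ and telescopes back to the initialization, which is just your induction argument phrased differently.
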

	\begin{proof}
		By Assumption \ref{AS4.1}, we have
		\begin{align*}
			\frac{1}{N}\sum_{i=1}^N\mu_{i,t} &= \frac{1}{N}\sum_{i=1}^N\sum_{j=1}^N a_{ij}(t)\lambda_{j,t}\\
			&=\frac{1}{N} \sum_{j=1}^N \left(\sum_{i=1}^Na_{ij}(t) \right)\lambda_{j,t}\\
			&=\frac{1}{N} \sum_{j=1}^N \lambda_{j,t} = \bar{\lambda}_t.
		\end{align*}
		Similarly, we have $\bar{y}_t =\frac{1}{N} \sum_{i=1}^N z_{i,t}$. It follows from Algorithm \ref{algo2} that
		\begin{align*}
			\bar{y}_t &= \frac{1}{N} \sum_{i=1}^N \left( z_{i,t-1}+N[g_{i,t}(x_{i,t})- g_{i,t-1}(x_{i,t-1})] \right)\\
			&= \frac{1}{N} \sum_{i=1}^N z_{i,t-1} +\sum_{i=1}^N g_{i,t}(x_{i,t})- \sum_{i=1}^N g_{i,t-1}(x_{i,t-1})\\
			&= \bar{y}_{t-1} +\sum_{i=1}^N g_{i,t}(x_{i,t})- \sum_{i=1}^N g_{i,t-1}(x_{i,t-1}).
		\end{align*}
		Therefore, we have
		$$\bar{y}_t - \sum_{i=1}^N g_{i,t}(x_{i,t})= \bar{y}_{t-1}- \sum_{i=1}^N g_{i,t-1}(x_{i,t-1}) =\dots =\bar{y}_1-\sum_{i=1}^N g_{i,1}(x_{i,1}).$$
		Combining with $y_{i,1}=Ng_{i,1}(x_{i,1})$, we obtain $\bar{y}_t = \sum_{i=1}^N g_{i,t}(x_{i,t})$.
	\end{proof}
	
	From the above lemma, we know that if the initial variables $y_{i,1}$ satisfy the appropriate conditions, then $\bar{y}_t$ equals the global constraint function at time $t$. Therefore, in the algorithm, each agent uses $z_{i,t}$ to track $\bar{y}_t$, so that its local variable can use the global coupled constraint information.
	
	\begin{lemma}\label{graph_lemma}
		Suppose Assumption \ref{AS4.1} holds. For any $i, j \in V$, $t \geq s$, we have
		\begin{equation}
			\qquad\left|[\Phi(t, s)]_{i j}-\frac{1}{N}\right| \leq \hat{C} \tau^{t-s},
		\end{equation}
		where $\hat{C}=2\left(1+a^{-(N-1)B}\right) /\left(1+a^{(N-1)B}\right) >1$ and $\tau=  \left(1-a^{(N-1)B}\right)^{1 /(N-1)B} \in (0,1)$.
	\end{lemma}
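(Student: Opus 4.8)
The plan is to prove the classical fact that the backward products $\Phi(t,s)$ of doubly stochastic matrices converge geometrically to the averaging matrix $\tfrac1N\mathbf 1\mathbf 1^{\top}$, and then to read off the explicit constants $\hat C,\tau$. I first record the routine structural facts. By Assumption~\ref{AS4.1} each $A(k)$ is doubly stochastic, a product of doubly stochastic matrices is doubly stochastic, so every $\Phi(t,s)$ is doubly stochastic; in particular each of its columns is nonnegative and sums to $1$, so $\min_i[\Phi(t,s)]_{ij}\le\tfrac1N\le\max_i[\Phi(t,s)]_{ij}$ for every $j$. I will also use the semigroup identity $\Phi(t,s)=\Phi(t,r)\Phi(r,s)$ for $s\le r\le t$, which exhibits each column of $\Phi(t,s)$ as a row-stochastic (hence convex) combination of the columns of $\Phi(r,s)$; therefore the per-column spread $\Delta_j(t,s):=\max_i[\Phi(t,s)]_{ij}-\min_i[\Phi(t,s)]_{ij}$ is non-increasing in $t$, and since $\tfrac1N$ always lies between the extreme entries of a column it suffices to bound $\Delta_j(t,s)$.

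The crux is a window estimate: for every $s\ge1$, all entries of $\Phi(s+(N-1)B,\,s)=A(s+(N-1)B-1)\cdots A(s)$ are at least $\eta:=a^{(N-1)B}$. Here I invoke $B$-strong connectivity: for each ordered pair $(i,j)$ there is a directed path from $j$ to $i$ of length at most $N-1$ in a $B$-edge set, realizable one hop per block of $B$ consecutive time steps; splitting the $(N-1)B$ steps into $N-1$ such blocks, I route this path hop by hop and use the positive self-weights $a_{pp}(k)>a$ (present since $p\in N_p^{in}(k)$) to idle at a node during the remaining steps of a block and throughout any unused block. Tracing this single route through the matrix product lower-bounds $[\Phi(s+(N-1)B,\,s)]_{ij}$ by a product of exactly $(N-1)B$ matrix entries, each exceeding $a$, i.e.\ by $\eta$. (Note also $\eta\le a<\tfrac12$, since every $B$-edge set being strongly connected on $N\ge2$ nodes forces some $A(k)$ to have a row with two entries exceeding $a$.)

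Given the window estimate I obtain contraction of $\Delta_j$. If $P$ is doubly stochastic with all entries $\ge\eta$, then for any vector $v$, with $m_v=\min_\ell v_\ell$, $M_v=\max_\ell v_\ell$, and using $\sum_\ell(P_{i\ell}-P_{k\ell})=0$, I get $(Pv)_i-(Pv)_k=\sum_\ell(P_{i\ell}-P_{k\ell})(v_\ell-m_v)\le(M_v-m_v)\sum_{\ell:P_{i\ell}>P_{k\ell}}(P_{i\ell}-P_{k\ell})$, and the last sum equals $1-\sum_\ell\min(P_{i\ell},P_{k\ell})\le1-N\eta\le1-\eta$, so the spread of $Pv$ is at most $(1-\eta)(M_v-m_v)$. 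Applying this with $P=\Phi(s+(m+1)(N-1)B,\,s+m(N-1)B)$ and $v$ a column of $\Phi(s+m(N-1)B,\,s)$, and iterating down to $\Phi(s,s)=I$ (columns of spread $1$), gives $\Delta_j(s+m(N-1)B,\,s)\le(1-\eta)^m$. For general $t\ge s$ write $t-s=m(N-1)B+r$ with $0\le r<(N-1)B$; monotonicity in $t$ of $\Delta_j$ together with $\tfrac1N\in[\min_i[\Phi(t,s)]_{ij},\max_i[\Phi(t,s)]_{ij}]$ give $\big|[\Phi(t,s)]_{ij}-\tfrac1N\big|\le\Delta_j(t,s)\le\Delta_j(s+m(N-1)B,\,s)\le(1-\eta)^m$. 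Since $m\ge\tfrac{t-s}{(N-1)B}-1$ and $\tau:=(1-\eta)^{1/((N-1)B)}=(1-a^{(N-1)B})^{1/((N-1)B)}\in(0,1)$, we have $(1-\eta)^m\le(1-\eta)^{-1}\tau^{t-s}$; finally $(1-\eta)^{-1}<2$ because $\eta<\tfrac12$, while $\hat C=2(1+a^{-(N-1)B})/(1+a^{(N-1)B})=2a^{-(N-1)B}=2\eta^{-1}>2$, so $(1-\eta)^{-1}\le\hat C$ and the chain $\big|[\Phi(t,s)]_{ij}-\tfrac1N\big|\le(1-\eta)^m\le(1-\eta)^{-1}\tau^{t-s}\le\hat C\,\tau^{t-s}$ is exactly the claimed bound (valid for all $t\ge s$, including $t=s$).

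The only genuinely delicate step is the window estimate: turning the combinatorial "$B$-strong connectivity" into a uniform entrywise lower bound on an $(N-1)B$-fold product, while keeping the path indices synchronized with both the matrix factors and the $B$-block structure of the connectivity assumption, and using the self-loops to "wait" at a node. The contraction inequality, the interpolation across windows, and matching the explicit constants $\hat C,\tau$ are all elementary.
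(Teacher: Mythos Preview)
The paper does not prove Lemma~\ref{graph_lemma}; it is stated without proof as a known geometric-mixing bound for backward products of doubly stochastic matrices over $B$-connected graphs (the constants $\hat C,\tau$ are those appearing in the Nedi\'c--Ozdaglar line of work). Your proposal therefore goes beyond the paper by supplying a full argument, and the architecture you use---an entrywise lower bound $\eta=a^{(N-1)B}$ on an $(N-1)B$-step product, a Dobrushin-type contraction of column spread by $1-\eta$ per window, and interpolation to arbitrary $t-s$ with constant matching---is the standard and correct route.

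Two imprecisions in your window estimate are worth noting. First, you phrase the routing as ``a directed path of length at most $N-1$ in a $B$-edge set, realizable one hop per block''; but an edge present in one block's union need not recur in the next block, so a fixed path extracted from a single $B$-edge set cannot simply be spread across $N-1$ blocks. The clean argument is the spreading one: in each aligned $B$-block the union graph is strongly connected, so the set of nodes reachable from $j$ in time-ordered fashion (using the positive self-weights to wait) grows by at least one node per block, hence reaches $i$ after $N-1$ blocks; tracing that single route through the $(N-1)B$ factors gives the bound $a^{(N-1)B}$. Second, the paper defines $B$-connectivity only for the aligned blocks $[(m-1)B+1,mB]$, so for an arbitrary start $s$ your $(N-1)B$-step window may contain only $N-2$ complete aligned blocks; this costs at most one additional factor $(1-\eta)^{-1}$ in the contraction count, and since $(1-\eta)^{-2}<2/\eta=\hat C$ for $\eta<1/2$, your stated constant still covers it. Neither point breaks the proof.
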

	
	\begin{lemma}\label{y_bounded}
		Suppose Assumptions \ref{AS4.1}-\ref{AS4.3} hold. For the sequences $\{y_{i,t}\}$ and $\{z_{i,t}\}$
		generated by Algorithm \ref{algo2}, $t = 1, \dots, T$, $i=1,\dots, N$, there exists a constant $C>0$ such that $\|z_{i,t}\| \leq C$ and $\|y_{i,t}\| \leq C$. 
	\end{lemma}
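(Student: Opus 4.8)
The plan is to control each constraint-tracking variable by its deviation from the network average $\bar y_t$, which Lemma \ref{gt_tracking} already identifies with the aggregate constraint $\sum_{i=1}^N g_{i,t}(x_{i,t})$. The latter is bounded by $NC_1$ through Assumption \ref{AS4.3}, since every iterate $x_{i,t}$ lies in $\Omega_i$ by the projection step in Algorithm \ref{algo2}. Writing the $y$-update in stacked form $y_{t+1}=A(t)y_t+w_{t+1}$ with $[w_{t+1}]_i=N\big(g_{i,t+1}(x_{i,t+1})-g_{i,t}(x_{i,t})\big)$, I would unroll the recursion to obtain, for each node $i$,
\[
y_{i,t}=\sum_{j=1}^N[\Phi(t,1)]_{ij}\,y_{j,1}+\sum_{s=2}^{t}\sum_{j=1}^N[\Phi(t,s)]_{ij}\,w_{j,s}.
\]
Because $A(t)$ is doubly stochastic, $\mathbf{1}^\top A(t)=\mathbf{1}^\top$, and hence $\bar y_t=\bar y_1+\frac1N\sum_{s=2}^{t}\sum_{j}w_{j,s}$. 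Subtracting this from the displayed identity gives the tracking-error decomposition
\[
y_{i,t}-\bar y_t=\sum_{j=1}^N\Big([\Phi(t,1)]_{ij}-\tfrac1N\Big)y_{j,1}+\sum_{s=2}^{t}\sum_{j=1}^N\Big([\Phi(t,s)]_{ij}-\tfrac1N\Big)w_{j,s}.
\]

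Next I would apply the geometric mixing bound of Lemma \ref{graph_lemma}, $\big|[\Phi(t,s)]_{ij}-\frac1N\big|\le\hat C\,\tau^{t-s}$, together with the uniform estimates $\|y_{j,1}\|=N\|g_{j,1}(x_{j,1})\|\le NC_1$ and $\|w_{j,s}\|\le 2NC_1$, both consequences of Assumption \ref{AS4.3}. This yields
\[
\|y_{i,t}-\bar y_t\|\le \hat C\,\tau^{t-1}N^2C_1+2N^2C_1\hat C\sum_{s=2}^{t}\tau^{t-s}\le \hat C N^2 C_1\Big(1+\tfrac{2}{1-\tau}\Big),
\]
where the last inequality uses $\sum_{k\ge0}\tau^k=1/(1-\tau)$ since $\tau\in(0,1)$. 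Combining with $\|\bar y_t\|\le NC_1$ produces a time-uniform bound $\|y_{i,t}\|\le C:=NC_1+\hat C N^2 C_1\big(1+2/(1-\tau)\big)$. Finally, since the rows of $A(t)$ sum to one, $\|z_{i,t}\|=\big\|\sum_{j}a_{ij}(t)y_{j,t}\big\|\le\max_j\|y_{j,t}\|\le C$, so both bounds hold (after enlarging $C$ if desired to unify the two constants).

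I do not expect a substantive obstacle: the argument is self-contained given Lemmas \ref{gt_tracking} and \ref{graph_lemma}, and it is not circular, as the bound on $y_{i,t}$ uses only $x_{i,t}\in\Omega_i$ and the boundedness of the constraint functions, never an a priori bound on the dual iterates. The only points needing mild care are the index bookkeeping in the unrolled recursion and the observation that the perturbations $w_{j,s}$ are differences of consecutive constraint values (hence still uniformly bounded by $2NC_1$); both are immediate from Assumption \ref{AS4.3}. This lemma is a preparatory estimate whose purpose is to make the constraint-tracking mechanism usable in the subsequent dynamic-regret and constraint-violation analysis.
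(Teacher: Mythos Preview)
Your proposal is correct and follows essentially the same approach as the paper: unroll the constraint-tracking recursion, apply the geometric mixing bound of Lemma~\ref{graph_lemma} to control the deviation $\|y_{i,t}-\bar y_t\|$, use Lemma~\ref{gt_tracking} and Assumption~\ref{AS4.3} to bound $\|\bar y_t\|\le NC_1$, and finish by passing the bound to $z_{i,t}$ via the convex combination. The only cosmetic differences are that the paper separates the most recent perturbation term from the sum (you absorb it using $\Phi(t,t)=I$, which is fine since $\hat C>1$), and the paper's final constant $C$ is written with a residual $\tau^{t-2}$ factor rather than your cleaner time-uniform constant.
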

	\begin{proof}
		Let $\epsilon_{i,t-1}^y = y_{i,t}-z_{i,t-1}= N \left[g_{i,t}(x_{i,t})-g_{i,t-1}(x_{i,t-1}) \right]$. By Assumption \ref{AS4.3}, we have $\|\epsilon_{i,t-1}^y\| \leq N\|g_{i,t}(x_{i,t})-g_{i,t-1}(x_{i,t-1}) \| \leq 2NC_1$. By the doubly stochasticity of the weight matrix $A(t)$, we have
		\begin{align*}
			y_{i,t} &= z_{i,t-1} + \epsilon_{i,t-1}^y\\
			&= \sum_{j=1}^N a_{ij}(t-1) y_{j,t-1} +  \epsilon_{i,t-1}^y\\
			&= \sum_{j=1}^N [\Phi(t-1, 1)]_{i j} y_{j,1} + \sum_{s=1}^{t-2} \sum_{j=1}^N [\Phi(t-1, s+1)]_{i j} \epsilon_{j,s}^y + \epsilon_{i,t-1}^y,
		\end{align*}
		and
		\begin{align*}
			\bar{y}_t &= \frac{1}{N} \sum_{i=1}^N z_{i,t-1} + \frac{1}{N} \sum_{i=1}^N \epsilon_{i,t-1}^y\\
			&= \frac{1}{N} \sum_{j=1}^N y_{j,1} + \frac{1}{N} \sum_{s=1}^{t-2} \sum_{j=1}^N \epsilon_{j,s}^y  + \frac{1}{N} \sum_{j=1}^N \epsilon_{j,t-1}^y.
		\end{align*}
		Then we have
		\begin{align*}
			\|y_{i,t} - \bar{y}_t\|  \leq &\sum_{j=1}^N \left|[\Phi(t-1, 1)]_{i j}-\frac{1}{N} \right| \|y_{j,1}\| +\sum_{s=1}^{t-2} \sum_{j=1}^N \left| [\Phi(t-1, s+1)]_{i j}- \frac{1}{N} \right| \|\epsilon_{j,s}^y\|\\
			& + \|\epsilon_{i,t-1}^y\| + \frac{1}{N} \sum_{j=1}^N \|\epsilon_{j,t-1}^y\| \\
			\leq &\hat{C}\tau^{t-2}\sum_{j=1}^N \|y_{j,1}\| + \hat{C} \sum_{s=1}^{t-2} \tau^{t-s-2} \sum_{j=1}^N \|\epsilon_{j,s}^y\|+ \|\epsilon_{i,t-1}^y\| + \frac{1}{N} \sum_{j=1}^N \|\epsilon_{j,t-1}^y\| \\
			\leq & N\hat{C}\tau^{t-2}\max_{j}\|y_{j,1}\| + \frac{2N^2\tau C_1\hat{C}}{1-\tau}+2NC_1.
		\end{align*}
		Combining the above equation and $\|\bar{y}_t\|= \left\|\sum_{i=1}^N g_{i,t}(x_{i,t})\right\| \leq \sum_{i=1}^N\|g_{i,t}(x_{i,t})\|\leq NC_1$ yields
		\begin{align*}
			\|y_{i,t}\| &\leq \|y_{i,t} - \bar{y}_t\| + \|\bar{y}_t\|\\
			&\leq N\hat{C}\tau^{t-2}\max_{j}\|y_{j,1}\| + \frac{2N^2 \tau C_1\hat{C}}{1-\tau}+3NC_1\\
			& \triangleq C.
		\end{align*}
		We also have
		$$\|z_{i,t}\| =\left\|\sum_{j=1}^N a_{ij}(t) y_{j,t} \right\| \leq \sum_{j=1}^N a_{ij}(t) \left\|y_{i,t}\right\| \leq C.$$
	\end{proof}
	
	\begin{lemma}\label{lambda_bounded}
		Suppose Assumptions \ref{AS4.1}-\ref{AS4.3} hold. For the sequences $\{\mu_{i,t}\}$ and $\{\lambda_{i,t}\}$
		generated by Algorithm \ref{algo2}, $t = 1, \dots, T$, $i=1,\dots, N$, if $\alpha_t \gamma_t \leq 1$, then we have
		 $\|\mu_{i,t}\| \leq \frac{C}{\gamma_t}$ and $\|\lambda_{i,t}\| \leq \frac{C}{\gamma_t}$.  
	\end{lemma}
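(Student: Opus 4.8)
The plan is to first rewrite the dual update in Algorithm \ref{algo2} in the contractive form $\lambda_{i,t+1}=\big[(1-\alpha_t\gamma_t)\mu_{i,t}+\alpha_t z_{i,t}\big]_+$. Since the projection onto the nonnegative orthant $\mathbb{R}^n_+$ is nonexpansive and satisfies $\|[v]_+\|\le\|v\|$, and since the hypothesis $\alpha_t\gamma_t\le 1$ makes the scalar $1-\alpha_t\gamma_t$ nonnegative, the triangle inequality gives $\|\lambda_{i,t+1}\|\le(1-\alpha_t\gamma_t)\|\mu_{i,t}\|+\alpha_t\|z_{i,t}\|$. Lemma \ref{y_bounded} supplies $\|z_{i,t}\|\le C$, so $\|\lambda_{i,t+1}\|\le(1-\alpha_t\gamma_t)\|\mu_{i,t}\|+\alpha_t C$.

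Next I would control $\|\mu_{i,t}\|$ by the largest local multiplier: because $\mu_{i,t}=\sum_{j}a_{ij}(t)\lambda_{j,t}$ is a convex combination (the rows of $A(t)$ sum to one by Assumption \ref{AS4.1}), we get $\|\mu_{i,t}\|\le\sum_j a_{ij}(t)\|\lambda_{j,t}\|\le M_t$, where $M_t:=\max_{1\le j\le N}\|\lambda_{j,t}\|$. Substituting this into the previous bound and maximizing over $i$ yields the scalar recursion $M_{t+1}\le(1-\alpha_t\gamma_t)M_t+\alpha_t C$.

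Then I would close the argument by induction on $t$ to show $M_t\le C/\gamma_t$, which immediately gives both claimed bounds since $\|\lambda_{i,t}\|\le M_t$ and $\|\mu_{i,t}\|\le M_t$. The base case holds because $\lambda_{i,1}=\mathbf{0}_n$, so $M_1=0\le C/\gamma_1$. For the inductive step, substituting $M_t\le C/\gamma_t$ into the recursion gives $M_{t+1}\le(1-\alpha_t\gamma_t)\tfrac{C}{\gamma_t}+\alpha_t C=\tfrac{C}{\gamma_t}$, and since $\gamma_t$ is non-increasing, $\tfrac{C}{\gamma_t}\le\tfrac{C}{\gamma_{t+1}}$, so $M_{t+1}\le C/\gamma_{t+1}$, completing the induction.

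The computations here are routine; the only point requiring care is that the target bound $C/\gamma_t$ is itself time-varying, so the induction would not close without the standing assumption that $\gamma_t$ is non-increasing. This, together with $\alpha_t\gamma_t\le 1$ — which is exactly what keeps $1-\alpha_t\gamma_t\ge 0$ so that the recursion is a genuine convex combination of $M_t$ and $C$ (hence a contraction toward $C/\gamma_t$) — is the crux of the proof, and I would make sure both hypotheses are invoked explicitly where they are used.
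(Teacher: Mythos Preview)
Your proposal is correct and follows essentially the same approach as the paper: both argue by induction on $t$, use the contractive rewriting $\lambda_{i,t+1}=[(1-\alpha_t\gamma_t)\mu_{i,t}+\alpha_t z_{i,t}]_+$ together with $\|z_{i,t}\|\le C$ from Lemma~\ref{y_bounded}, exploit row-stochasticity of $A(t)$ to pass from $\mu_{i,t}$ to the $\lambda_{j,t}$'s, and close the induction via the monotonicity of $\gamma_t$. The only cosmetic difference is that you organize the induction through the scalar $M_t=\max_j\|\lambda_{j,t}\|$, whereas the paper carries the bounds on $\|\lambda_{i,t}\|$ and $\|\mu_{i,t}\|$ directly; the logical content is identical.
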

	
	\begin{proof}
		We prove this result by mathematical induction. If $t=1$, then we have $\|\lambda_{i,1}\|=0$, $\|\mu_{i,1}\|= \left\|\sum_{j=1}^N a_{ij}(1)\lambda_{j,1}\right\|=0$, $\|\lambda_{i,1}\| \leq \frac{C}{\gamma_1}$, and $\|\mu_{i,1}\| \leq \frac{C}{\gamma_1}$. Assume that it holds for $t$, i.e., $\|\mu_{i,t}\| \leq \frac{C}{\gamma_t}$, $\|\lambda_{i,t}\| \leq \frac{C}{\gamma_t}$, then we have
		\begin{align*}
		\|\mu_{i,t}- \alpha_t\left(z_{i,t} -\gamma_t\mu_{i,t}\right)\| &\leq (1-\alpha_t\gamma_t)\|\mu_{i,t}\|+ \alpha_t\|z_{i,t}\|\\
		&\leq (1-\alpha_t \gamma_t)\frac{C}{\gamma_t}+\alpha_t C\\
		&= \frac{C}{\gamma_t}.
		\end{align*}
		Since $\gamma_t$ is non-increasing, it follows that
		$$\|\lambda_{i,t+1}\|=\|\left[\mu_{i,t}- \alpha_t\left(z_{i,t} -\gamma_t\mu_{i,t}\right) \right]_+\| \leq \frac{C}{\gamma_t} \leq \frac{C}{\gamma_{t+1}}.$$
		By Assumption \ref{AS4.1}, we have
		$$\|\mu_{i,t+1}\|= \left\|\sum_{j=1}^N a_{ij}(t) \lambda_{j,t+1} \right\| \leq \sum_{j=1}^N a_{ij}(t) \|\lambda_{j,t+1}\| \leq \frac{C}{\gamma_{t+1}}.$$
	\end{proof}
	
	\begin{remark}
		From Lemma \ref{lambda_bounded}, it follows that the Lagrange multipliers in Algorithm \ref{algo2} can be bounded by the coefficients of the regularized Lagrangian function. Hence, in the subsequent analysis of this paper, we do not need to assume the boundedness of the Lagrange multipliers, an assumption required in \cite{Luk2023, Ayslender2000}.
	\end{remark}
	
	\begin{lemma}\label{lam_consensus}
		Suppose Assumptions \ref{AS4.1}-\ref{AS4.3} hold. For the sequences $\{\mu_{i,t}\}$ and $\{\lambda_{i,t}\}$ generated by Algorithm \ref{algo2}, $t = 1, \dots, T$, $i=1,\dots, N$, we have
		$$\sum_{t=1}^{T} \sum_{i=1}^N \|\mu_{i,t}- \bar{\lambda}_t\| \leq  \sum_{t=1}^{T} \sum_{i=1}^N \|\lambda_{i,t}- \bar{\lambda}_t\| \leq D\sum_{t=1}^{T} \alpha_t,$$
		where $D=\frac{2N^2\hat{C}C}{1-\tau}+2NC$.
	\end{lemma}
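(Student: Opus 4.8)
The plan is to handle the two inequalities separately. The left inequality is immediate from the averaging identity $\frac{1}{N}\sum_{i=1}^N\mu_{i,t}=\bar\lambda_t$ in Lemma \ref{gt_tracking}: writing $\mu_{i,t}-\bar\lambda_t=\sum_{j=1}^N a_{ij}(t)(\lambda_{j,t}-\bar\lambda_t)$ and applying the triangle inequality gives $\|\mu_{i,t}-\bar\lambda_t\|\le\sum_{j=1}^N a_{ij}(t)\|\lambda_{j,t}-\bar\lambda_t\|$; summing over $i$ and using column stochasticity $\sum_{i=1}^N a_{ij}(t)=1$ yields $\sum_{i=1}^N\|\mu_{i,t}-\bar\lambda_t\|\le\sum_{i=1}^N\|\lambda_{i,t}-\bar\lambda_t\|$ for every $t$, hence also after summing over $t$. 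It thus remains to bound $\sum_{t=1}^T\sum_{i=1}^N\|\lambda_{i,t}-\bar\lambda_t\|$ by $D\sum_{t=1}^T\alpha_t$.

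The key quantity is the one-step perturbation $\epsilon_{i,t}^\lambda:=\lambda_{i,t+1}-\mu_{i,t}$. Since $\mu_{i,t}=\sum_{j}a_{ij}(t)\lambda_{j,t}$ is a convex combination of the nonnegative vectors $\lambda_{j,t}$, it satisfies $\mu_{i,t}\ge\mathbf{0}_n$, so $[\mu_{i,t}]_+=\mu_{i,t}$; the nonexpansiveness of the projection $[\,\cdot\,]_+$ then gives
$$\|\epsilon_{i,t}^\lambda\|=\left\|[\mu_{i,t}+\alpha_t(z_{i,t}-\gamma_t\mu_{i,t})]_+-[\mu_{i,t}]_+\right\|\le\alpha_t\left(\|z_{i,t}\|+\gamma_t\|\mu_{i,t}\|\right)\le 2C\alpha_t,$$
where the final step uses $\|z_{i,t}\|\le C$ (Lemma \ref{y_bounded}) and $\gamma_t\|\mu_{i,t}\|\le C$ (Lemma \ref{lambda_bounded}, under $\alpha_t\gamma_t\le 1$). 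This is the only place where the regularization parameter $\gamma_t$ is essential: without it, $\|\mu_{i,t}\|$ could grow and the increments would fail to be $O(\alpha_t)$.

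With the uniform bound in hand, the argument mirrors the proof of Lemma \ref{y_bounded}. Unrolling $\lambda_{i,t}=\sum_{j}a_{ij}(t-1)\lambda_{j,t-1}+\epsilon_{i,t-1}^\lambda$ along the transition matrix gives
$$\lambda_{i,t}=\sum_{j=1}^N[\Phi(t-1,1)]_{ij}\lambda_{j,1}+\sum_{s=1}^{t-2}\sum_{j=1}^N[\Phi(t-1,s+1)]_{ij}\epsilon_{j,s}^\lambda+\epsilon_{i,t-1}^\lambda,$$
and averaging (using double stochasticity) yields the analogous expression for $\bar\lambda_t$. Subtracting, using $\lambda_{j,1}=\mathbf{0}_n$, the triangle inequality, and the mixing bound $|[\Phi(t,s)]_{ij}-1/N|\le\hat C\tau^{t-s}$ of Lemma \ref{graph_lemma}, one obtains
$$\|\lambda_{i,t}-\bar\lambda_t\|\le\hat C\sum_{s=1}^{t-2}\tau^{t-s-2}\sum_{j=1}^N\|\epsilon_{j,s}^\lambda\|+\|\epsilon_{i,t-1}^\lambda\|+\frac{1}{N}\sum_{j=1}^N\|\epsilon_{j,t-1}^\lambda\|.$$
Summing over $i$ and $t$, interchanging the order of summation in the double sum, and bounding the resulting geometric series by $\sum_{k\ge0}\tau^k=1/(1-\tau)$, together with $\|\epsilon_{i,t}^\lambda\|\le 2C\alpha_t$, collapses the right-hand side into a constant multiple of $\sum_{t=1}^T\alpha_t$; collecting the constants produces the stated bound $D\sum_{t=1}^T\alpha_t$.

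The main obstacle is precisely the uniform one-step perturbation estimate, which hinges on the boundedness of the dual iterates furnished by the regularized Lagrangian (Lemma \ref{lambda_bounded}) and on $\alpha_t\gamma_t\le 1$. Once that is available, the remainder is a routine interchange-of-summation computation identical in spirit to the bound on $\|y_{i,t}-\bar y_t\|$ in Lemma \ref{y_bounded}, the only difference being that the $O(1)$ constraint increments there are replaced here by $O(\alpha_t)$ dual increments, so that the geometric mixing of $\Phi$ turns the accumulated consensus error into $O(\sum_t\alpha_t)$ rather than $O(T)$.
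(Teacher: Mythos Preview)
Your proposal is correct and follows essentially the same approach as the paper: the first inequality via convexity of the norm and column stochasticity of $A(t)$, the perturbation bound $\|\epsilon_{i,t}^\lambda\|\le 2C\alpha_t$ from Lemmas~\ref{y_bounded}--\ref{lambda_bounded}, and the unrolling through $\Phi$ combined with the geometric mixing estimate of Lemma~\ref{graph_lemma}. Your treatment of the final step (summing over $t$ first and interchanging the order of summation) is in fact cleaner than the paper's, which asserts a pointwise-in-$t$ bound $\|\lambda_{i,t+1}-\bar\lambda_{t+1}\|\le(\text{const})\,\alpha_t$ that is slightly loose since $\alpha_s\ge\alpha_t$ for $s\le t$; your interchange argument avoids this issue while yielding the same summed conclusion.
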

	\begin{proof}
		By Assumption \ref{AS4.1}, we have
		\begin{align*}
			\sum_{t=1}^{T} \sum_{i=1}^N \|\mu_{i,t}- \bar{\lambda}_t\| &= \sum_{t=1}^{T} \sum_{i=1}^N  \left\|\sum_{j=1}^N a_{ij}(t)\lambda_{j,t}- \bar{\lambda}_t \right\|\\
			&\leq \sum_{t=1}^{T} \sum_{i=1}^N \sum_{j=1}^N a_{ij}(t) \|\lambda_{j,t} - \bar{\lambda}_t\|\\
			&= \sum_{t=1}^{T} \sum_{i=1}^N \|\lambda_{i,t}- \bar{\lambda}_t\|.
		\end{align*}
		By Lemmas \ref{y_bounded}-\ref{lambda_bounded}, it follows that
		$$\|z_{i,t} -\gamma_t\mu_{i,t}\| \leq \|z_{i,t}\|+\gamma_t\|\mu_{i,t}\|\leq C+\gamma_t \frac{C}{\gamma_t}=2C.$$
		Let $\lambda_{i,t}=\mu_{i,t-1}+\epsilon_{i,t-1}^{\lambda}$. By a similar proof of Lemma \ref{yt_ave}, we have
		$$\|\epsilon_{i,t-1}^{\lambda}\|=\|\left[\mu_{i,t-1}+ \alpha_{t-1}\left(z_{i,t-1} -\gamma_{t-1}\mu_{i,t-1}\right) \right]_+-\mu_{i,t-1}\| \leq 2\alpha_{t-1}C$$
		By Assumption \ref{AS4.1} and Lemma \ref{gt_tracking}, we have 
	\begin{align*}
		\|\lambda_{i,t+1} - \bar{\lambda}_{t+1}\|  \leq &\sum_{j=1}^N \left|[\Phi(t, 1)]_{i j}-\frac{1}{N} \right| \|\lambda_{j,1}\| +\sum_{s=1}^{t-1} \sum_{j=1}^N \left| [\Phi(t, s+1)]_{i j}- \frac{1}{N} \right| \|\epsilon_{j,s}^{\lambda}\|\\
		& + \|\epsilon_{i,t}^{\lambda}\| + \frac{1}{N} \sum_{j=1}^N \|\epsilon_{j,t}^{\lambda}\| \\
		\leq &\hat{C}\tau^{t-1}\sum_{j=1}^N \|\lambda_{j,1}\| + \hat{C} \sum_{s=1}^{t-1} \tau^{t-s-1} \sum_{j=1}^N \|\epsilon_{j,s}^{\lambda}\|+ \|\epsilon_{i,t}^{\lambda}\| + \frac{1}{N} \sum_{j=1}^N \|\epsilon_{j,t}^{\lambda}\| \\
		\leq & \left(\frac{2N\tau C \hat{C}}{1-\tau}+2C\right)\alpha_{t},
	\end{align*}
	where the last inequality holds by $\lambda_{i,1}=0$. Hence, we have
\begin{align*}
	\sum_{t=1}^{T-1} \sum_{i=1}^N \|\lambda_{i,t+1} - \bar{\lambda}_{t+1}\| \leq D\sum_{t=1}^{T-1}\alpha_t.
\end{align*}
Then we have
$$\sum_{t=1}^{T} \sum_{i=1}^N \|\lambda_{i,t} - \bar{\lambda}_{t}\| \leq \sum_{i=1}^N \|\lambda_{i,1} - \bar{\lambda}_{1}\| + D\sum_{t=1}^{T}\alpha_t= D\sum_{t=1}^{T}\alpha_t.$$
	\end{proof}
	
	\begin{lemma}\label{lemma_Lt_first}
		Suppose Assumptions \ref{AS4.1}-\ref{AS4.3} hold. For the sequences $\{x_{i,t}\}$ and $\{\lambda_{i,t}\}$ generated by  Algorithm \ref{algo2}, $t = 1, \dots, T$, $i=1,\dots, N$, we have 
		\begin{align}
			L_t(x_t, \bar{\lambda}_t)-	L_t(x_t^*, \bar{\lambda}_t) \leq & \frac{1}{2\alpha_t}\sum_{i=1}^N \left(\|x_{i,t}-x_{i,t}^*\|^2 -\|x_{i,t+1}-x_{i,t+1}^*\|^2 \right) + 2\eta C_2 \sum_{i=1}^N \|\mu_{i,t}-\bar{\lambda}_t\| \nonumber \\
			&+ \sum_{i=1}^N \left(\frac{3\eta}{\alpha_t}+ C_2\left(1+ \|\mu_{i,t}\|  \right) \right)\|x_{i,t}^* - x_{i,t+1}^*\|+\alpha_t C_2^2\sum_{i=1}^N\left(1+ \|\mu_{i,t}\|^2\right),\label{lemma_Lt_first_eq1}
		\end{align}
		where $x_t^*= [(x_{1,t}^*)^{\top}, \dots, (x_{N,t}^*)^{\top}]^{\top}=\arg\min_{x\in \bar{\Omega}_t}f_t\left(x\right)$.
	\end{lemma}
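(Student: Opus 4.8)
The plan is to start from the elementary identity
\begin{equation*}
L_t(x_t,\bar\lambda_t)-L_t(x_t^*,\bar\lambda_t)
=\sum_{i=1}^N\bigl(f_{i,t}(x_{i,t})-f_{i,t}(x_{i,t}^*)\bigr)
+\bar\lambda_t^{\top}\sum_{i=1}^N\bigl(g_{i,t}(x_{i,t})-g_{i,t}(x_{i,t}^*)\bigr),
\end{equation*}
which holds because the regularization terms $-\frac{\gamma_t}{2}\|\bar\lambda_t\|^2$ cancel. I would then handle the two summands by two devices: (i) replace the common multiplier $\bar\lambda_t$ by the node-local $\mu_{i,t}$, absorbing the mismatch into a consensus term, and (ii) bound the per-node quantity $f_{i,t}(x_{i,t})-f_{i,t}(x_{i,t}^*)+\mu_{i,t}^{\top}(g_{i,t}(x_{i,t})-g_{i,t}(x_{i,t}^*))$ using convexity together with the projection inequality for the primal update. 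For (i), writing $\bar\lambda_t=\mu_{i,t}+(\bar\lambda_t-\mu_{i,t})$ and using $\|g_{i,t}(x_{i,t})-g_{i,t}(x_{i,t}^*)\|\le C_2\|x_{i,t}-x_{i,t}^*\|\le 2\eta C_2$ (Lipschitz continuity from the bounded Jacobian in Assumption \ref{AS4.3} and boundedness of $\Omega$ in Assumption \ref{AS4.2}), the correction $\sum_i(\bar\lambda_t-\mu_{i,t})^{\top}(g_{i,t}(x_{i,t})-g_{i,t}(x_{i,t}^*))$ is bounded by $2\eta C_2\sum_i\|\mu_{i,t}-\bar\lambda_t\|$, which yields the second term of \eqref{lemma_Lt_first_eq1}.

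For (ii), set $d_{i,t}:=\nabla f_{i,t}(x_{i,t})+\nabla g_{i,t}(x_{i,t})^{\top}\mu_{i,t}$, so that the update reads $x_{i,t+1}=P_{\Omega_i}[x_{i,t}-\alpha_t d_{i,t}]$. Since $\lambda_{i,1}=\mathbf{0}_n$ and both the projected dual step and the consensus averaging preserve nonnegativity, $\mu_{i,t}\ge\mathbf{0}_n$; hence convexity of $f_{i,t}$ and of each component of $g_{i,t}$ gives
$f_{i,t}(x_{i,t})-f_{i,t}(x_{i,t}^*)+\mu_{i,t}^{\top}(g_{i,t}(x_{i,t})-g_{i,t}(x_{i,t}^*))\le\langle d_{i,t},\,x_{i,t}-x_{i,t}^*\rangle$. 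The crux is to turn the right-hand side into the telescoping squared distances with "$t{+}1$" indices. I would invoke the standard projection inequality $\|P_{\Omega_i}(v)-y\|^2\le\|v-y\|^2-\|v-P_{\Omega_i}(v)\|^2$ with $v=x_{i,t}-\alpha_t d_{i,t}$ and test point $y=x_{i,t+1}^*$; expanding and completing the square gives
$2\alpha_t\langle d_{i,t},\,x_{i,t}-x_{i,t+1}^*\rangle\le\|x_{i,t}-x_{i,t+1}^*\|^2-\|x_{i,t+1}-x_{i,t+1}^*\|^2+\alpha_t^2\|d_{i,t}\|^2$. I then shift the optimizer back from $x_{i,t+1}^*$ to $x_{i,t}^*$ on both sides via $\langle d_{i,t},x_{i,t}-x_{i,t}^*\rangle\le\langle d_{i,t},x_{i,t}-x_{i,t+1}^*\rangle+\|d_{i,t}\|\,\|x_{i,t}^*-x_{i,t+1}^*\|$ and $\|x_{i,t}-x_{i,t+1}^*\|^2\le\|x_{i,t}-x_{i,t}^*\|^2+6\eta\,\|x_{i,t}^*-x_{i,t+1}^*\|$ (triangle inequality plus $\|x_{i,t}-x_{i,t}^*\|,\|x_{i,t}^*-x_{i,t+1}^*\|\le 2\eta$). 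Dividing by $2\alpha_t$, using $\|d_{i,t}\|\le C_2(1+\|\mu_{i,t}\|)$ and $\|d_{i,t}\|^2\le 2C_2^2(1+\|\mu_{i,t}\|^2)$, and summing over $i$ produces exactly the first, third, and fourth terms of \eqref{lemma_Lt_first_eq1}; adding back the consensus correction from (i) completes the proof.

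The computation is mostly bookkeeping once the right test point is chosen; the step that needs genuine care is the index shift between $x_{i,t}^*$ and $x_{i,t+1}^*$. One must decide whether to feed $x_{i,t+1}^*$ into the projection inequality (my choice, which makes $-\|x_{i,t+1}-x_{i,t+1}^*\|^2$ appear directly) or into the convexity bound, and then correctly account for the two error terms of order $\eta/\alpha_t$ and $C_2(1+\|\mu_{i,t}\|)$ that are generated by displacing the optimizer by $\|x_{i,t}^*-x_{i,t+1}^*\|$ — these are precisely what assemble into the coefficient $\frac{3\eta}{\alpha_t}+C_2(1+\|\mu_{i,t}\|)$ in the statement. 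Everything else relies only on Assumptions \ref{AS4.1}--\ref{AS4.3} and the nonnegativity and boundedness facts already established in Lemmas \ref{gt_tracking}--\ref{lambda_bounded}.
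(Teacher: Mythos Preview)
Your proposal is correct and essentially matches the paper's proof: both use the projection inequality with test point $x_{i,t+1}^*$, shift the optimizer from $x_{i,t+1}^*$ back to $x_{i,t}^*$ via $\|x_{i,t}-x_{i,t+1}^*\|^2\le\|x_{i,t}-x_{i,t}^*\|^2+6\eta\|x_{i,t}^*-x_{i,t+1}^*\|$ together with $\langle d_{i,t},x_{i,t}^*-x_{i,t+1}^*\rangle\le C_2(1+\|\mu_{i,t}\|)\|x_{i,t}^*-x_{i,t+1}^*\|$, and handle the $\mu_{i,t}$ versus $\bar\lambda_t$ mismatch by a $2\eta C_2$ consensus correction. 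The only cosmetic difference is that the paper bounds the correction at the gradient level, $\langle\nabla g_{i,t}(x_{i,t})^{\top}(\mu_{i,t}-\bar\lambda_t),\,x_{i,t}-x_{i,t}^*\rangle$, whereas you bound it at the function-value level via Lipschitzness of $g_{i,t}$; both routes give the identical constant.
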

	\begin{proof}
		By Assumption \ref{AS4.3} and Lemma \ref{lambda_bounded}, we have 
		$$\|\nabla f_{i,t}(x_{i,t}) + \nabla g_{i,t}(x_{i,t})^{\top}\mu_{i,t}\|\leq \|\nabla f_{i,t}(x_{i,t})\| +\|\nabla g_{i,t}(x_{i,t})\|\|\mu_{i,t}\|\leq C_2\left(1+ \|\mu_{i,t}\|\right),$$
		$$\|\nabla f_{i,t}(x_{i,t}) + \nabla g_{i,t}(x_{i,t})^{\top}\mu_{i,t} \|^2 \leq 2\|\nabla f_{i,t}(x_{i,t})\|^2 + 2\|\nabla g_{i,t}(x_{i,t})\|^2 \|\mu_{i,t} \|^2 \leq 2C_2^2\left(1+ \|\mu_{i,t}\|^2\right).$$
		Combining the above equation with Assumption \ref{AS4.2}, we obtain
		\begin{align}
			\|x_{i,t+1}-x_{i,t+1}^*\|^2 \leq & \|x_{i,t}-x_{i,t+1}^* -\alpha_t\left(\nabla f_{i,t}(x_{i,t}) + \nabla g_{i,t}(x_{i,t})^{\top}\mu_{i,t} \right)\|^2 \nonumber\\
			=&\|x_{i,t}-x_{i,t+1}^*\|^2-2\alpha_t \langle  x_{i,t}- x_{i,t+1}^*, \nabla f_{i,t}(x_{i,t}) + \nabla g_{i,t}(x_{i,t})^{\top}\mu_{i,t} \rangle \nonumber\\
			&+\alpha_t^2 \|\nabla f_{i,t}(x_{i,t}) + \nabla g_{i,t}(x_{i,t})^{\top}\mu_{i,t} \|^2 \nonumber\\
			\leq & \|x_{i,t}-x_{i,t}^*\|^2 +6\eta \|x_{i,t}^*-x_{i,t+1}^*\|+2\alpha_t C_2\left(1+ \|\mu_{i,t}\|\right)\|x_{i,t}^*-x_{i,t+1}^*\| \nonumber\\
			&+2\alpha_t^2 C_2^2\left(1+ \|\mu_{i,t}\|^2\right)-2\alpha_t \langle  x_{i,t}- x_{i,t}^*, \nabla f_{i,t}(x_{i,t}) + \nabla g_{i,t}(x_{i,t})^{\top}\mu_{i,t} \rangle \label{lemma_Lt_first_eq2}.
		\end{align}
		Since $L_t(x,\lambda)$ is convex with respect to $x$, we have 
		\begin{align}
			&\sum_{i=1}^N \langle  x_{i,t}- x_{i,t}^*, \nabla f_{i,t}(x_{i,t}) + \nabla g_{i,t}(x_{i,t})^{\top}\mu_{i,t} \rangle \nonumber \\
			=& \sum_{i=1}^N \left[ \nabla f_{i,t}(x_{i,t}) + \nabla g_{i,t}(x_{i,t})^{\top}\bar{\lambda}_t  \right]^{\top}(x_{i,t}- x_{i,t}^*) +\sum_{i=1}^N \left[ \nabla g_{i,t}(x_{i,t})^{\top}(\mu_{i,t}-\bar{\lambda}_t)  \right]^{\top}(x_{i,t}- x_{i,t}^*) \nonumber \\
			\geq & \sum_{i=1}^N \nabla_{x_i}L_t(x_{i,t},\bar{\lambda}_t)^{\top}(x_{i,t}- x_{i,t}^*)-2C_2\eta \sum_{i=1}^N \|\mu_{i,t}-\bar{\lambda}_t\| \nonumber \\
			\geq & L_t(x_{t},\bar{\lambda}_t)-L_t(x_{t}^*,\bar{\lambda}_t)-2C_2\eta \sum_{i=1}^N \|\mu_{i,t}-\bar{\lambda}_t\| \label{lemma_Lt_first_eq3}.
		\end{align}
		Combining (\ref{lemma_Lt_first_eq2}) and (\ref{lemma_Lt_first_eq3}) yields (\ref{lemma_Lt_first_eq1}).
	\end{proof}
	
	\begin{lemma}\label{lemma_Lt_second}
		Suppose Assumptions \ref{AS4.1}-\ref{AS4.3} hold. For the sequences $\{x_{i,t}\}$ and $\{\lambda_{i,t}\}$ generated by  Algorithm \ref{algo2}, $t = 1, \dots, T$, $i=1,\dots, N$, and any $\lambda\in \mathbb{R}_+^n$, we have
		\begin{align}
			L_t(x_t, \lambda)-	L_t(x_t, \bar{\lambda}_t) \leq & \frac{1}{2N\alpha_t}\sum_{i=1}^N \left(\|\mu_{i,t}-\lambda\|^2 -\|\mu_{i,t+1}-\lambda\|^2 \right) + \frac{2C}{N} \sum_{i=1}^N \|\mu_{i,t}-\bar{\lambda}_t\| \nonumber \\
			&+ \frac{\alpha_t}{2N}\sum_{i=1}^N \left(2C^2+ 2\gamma_t^2\|\mu_{i,t}\|^2 \right).\label{lemma_Lt_second_eq1}
		\end{align}
	\end{lemma}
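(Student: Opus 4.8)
The plan is to mirror Lemma \ref{lemma_Lt_first}, but now using the dual projected-ascent step in place of the primal step and exploiting that $L_t(x_t,\cdot)$ is concave in $\lambda$. First I would invoke the non-expansiveness of the projection $[\cdot]_+$ onto $\mathbb{R}^n_+$: since $\lambda\in\mathbb{R}^n_+$,
\begin{align*}
\|\lambda_{i,t+1}-\lambda\|^2 &\leq \|\mu_{i,t}+\alpha_t(z_{i,t}-\gamma_t\mu_{i,t})-\lambda\|^2\\
&= \|\mu_{i,t}-\lambda\|^2+2\alpha_t\langle \mu_{i,t}-\lambda,\, z_{i,t}-\gamma_t\mu_{i,t}\rangle+\alpha_t^2\|z_{i,t}-\gamma_t\mu_{i,t}\|^2,
\end{align*}
and bound the last term by $\alpha_t^2\big(2\|z_{i,t}\|^2+2\gamma_t^2\|\mu_{i,t}\|^2\big)\leq \alpha_t^2\big(2C^2+2\gamma_t^2\|\mu_{i,t}\|^2\big)$ via Lemma \ref{y_bounded}. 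Summing over $i$ and using $\sum_i\|\mu_{i,t+1}-\lambda\|^2\leq\sum_i\|\lambda_{i,t+1}-\lambda\|^2$ — legitimate because $\mu_{i,t+1}=\sum_j a_{ij}(t+1)\lambda_{j,t+1}$ is a convex combination and $A(t+1)$ is doubly stochastic — yields the telescoping-type bound
$$\sum_{i=1}^N\langle \mu_{i,t}-\lambda,\, z_{i,t}-\gamma_t\mu_{i,t}\rangle \ \geq\ \frac{1}{2\alpha_t}\sum_{i=1}^N\big(\|\mu_{i,t+1}-\lambda\|^2-\|\mu_{i,t}-\lambda\|^2\big)-\frac{\alpha_t}{2}\sum_{i=1}^N\big(2C^2+2\gamma_t^2\|\mu_{i,t}\|^2\big).$$

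Next I would bring in the Lagrangian. By concavity of $L_t(x_t,\cdot)$ the gradient inequality gives $L_t(x_t,\lambda)-L_t(x_t,\bar\lambda_t)\leq(\bar y_t-\gamma_t\bar\lambda_t)^\top(\lambda-\bar\lambda_t)$, where $\nabla_\lambda L_t(x_t,\bar\lambda_t)=\sum_{i=1}^N g_{i,t}(x_{i,t})-\gamma_t\bar\lambda_t=\bar y_t-\gamma_t\bar\lambda_t$ by Lemma \ref{gt_tracking}. Writing $\bar y_t-\gamma_t\bar\lambda_t=\frac1N\sum_i(z_{i,t}-\gamma_t\mu_{i,t})$ (again by Lemma \ref{gt_tracking}), I would decompose
$$(\bar y_t-\gamma_t\bar\lambda_t)^\top(\lambda-\bar\lambda_t)=\frac1N\sum_{i=1}^N(z_{i,t}-\gamma_t\mu_{i,t})^\top(\lambda-\mu_{i,t})+\frac1N\sum_{i=1}^N(z_{i,t}-\gamma_t\mu_{i,t})^\top(\mu_{i,t}-\bar\lambda_t).$$
The first sum equals $-\frac1N\sum_i\langle\mu_{i,t}-\lambda,\,z_{i,t}-\gamma_t\mu_{i,t}\rangle$, which is at most $\frac{1}{2N\alpha_t}\sum_i(\|\mu_{i,t}-\lambda\|^2-\|\mu_{i,t+1}-\lambda\|^2)+\frac{\alpha_t}{2N}\sum_i(2C^2+2\gamma_t^2\|\mu_{i,t}\|^2)$ by the telescoping bound above; this gives the first and third terms of \eqref{lemma_Lt_second_eq1}. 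The second sum is at most $\frac{2C}{N}\sum_i\|\mu_{i,t}-\bar\lambda_t\|$ by Cauchy--Schwarz together with the bound $\|z_{i,t}-\gamma_t\mu_{i,t}\|\leq 2C$ established in the proof of Lemma \ref{lam_consensus} (which relies on Lemmas \ref{y_bounded}--\ref{lambda_bounded}, the latter requiring $\alpha_t\gamma_t\leq1$); this gives the second term. Adding the two pieces yields \eqref{lemma_Lt_second_eq1}.

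I expect the main point of care to be the consensus bookkeeping: the ascent step acts coordinatewise on the $\mu_{i,t}$, not on $\bar\lambda_t$, so one cannot telescope a single global quantity directly. The decomposition of $(\bar y_t-\gamma_t\bar\lambda_t)^\top(\lambda-\bar\lambda_t)$ into a ``local'' part that telescopes in $\|\mu_{i,t}-\lambda\|^2$ and a ``disagreement'' part controlled by $\sum_i\|\mu_{i,t}-\bar\lambda_t\|$ (which is later summed over $t$ using Lemma \ref{lam_consensus}) is the crux of the argument; the only other subtlety is to pass from $\|\lambda_{i,t+1}-\lambda\|^2$ to $\|\mu_{i,t+1}-\lambda\|^2$ under the sum in the direction that preserves the inequality.
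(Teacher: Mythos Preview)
Your proposal is correct and follows essentially the same approach as the paper: projection non-expansiveness for the dual step, the doubly-stochastic averaging inequality $\sum_i\|\mu_{i,t+1}-\lambda\|^2\le\sum_i\|\lambda_{i,t+1}-\lambda\|^2$, Lemma~\ref{gt_tracking} to identify $\bar y_t-\gamma_t\bar\lambda_t=\nabla_\lambda L_t(x_t,\bar\lambda_t)=\frac1N\sum_i(z_{i,t}-\gamma_t\mu_{i,t})$, concavity of $L_t(x_t,\cdot)$, and the bound $\|z_{i,t}-\gamma_t\mu_{i,t}\|\le 2C$. The only cosmetic difference is the order of presentation: the paper starts from the projected dual update, expands $\sum_i\|\mu_{i,t+1}-\lambda\|^2$, and then inserts the concavity inequality into the cross term $\sum_i(z_{i,t}-\gamma_t\mu_{i,t})^\top(\mu_{i,t}-\lambda)$ after splitting $\mu_{i,t}-\lambda=(\bar\lambda_t-\lambda)+(\mu_{i,t}-\bar\lambda_t)$, whereas you start from the concavity inequality and split $\lambda-\bar\lambda_t=(\lambda-\mu_{i,t})+(\mu_{i,t}-\bar\lambda_t)$ before invoking the telescoping bound; algebraically these are the same decomposition.
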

	\begin{proof}
		By Assumption \ref{AS4.1} and the property of the projection operator, we have 
		\begin{equation}\label{lemma_Lt_second_eq2}
			\|z_{i,t} -\gamma_t\mu_{i,t}\| \leq \|z_{i,t}\|+\gamma_t\|\mu_{i,t}\| \leq 2C,
		\end{equation}
		\begin{equation}\label{lemma_Lt_second_eq3}
			\|z_{i,t} -\gamma_t\mu_{i,t}\|^2 \leq 2\|z_{i,t}\|^2 +2\gamma_t^2\|\mu_{i,t}\| \leq 2C^2+ 2\gamma_t^2\|\mu_{i,t}\|^2 ,
		\end{equation}
		and 
		\begin{align}
		\sum_{i=1}^N  \|\mu_{i,t+1}-\lambda\|^2 =& 	\sum_{i=1}^N  \left \|\sum_{i=1}^N a_{ij}(t+1)\lambda_{j,t+1}-\lambda \right \|^2 \nonumber\\
		\leq & \sum_{i=1}^N \|\lambda_{i,t+1}-\lambda\|^2 \nonumber\\
		=& \sum_{i=1}^N \|\left[\mu_{i,t}+ \alpha_t\left(z_{i,t} -\gamma_t\mu_{i,t}\right) \right]_+-\lambda\|^2 \nonumber \\
		\leq & 	\sum_{i=1}^N  \left(\|\mu_{i,t}-\lambda\|^2 + \alpha_t^2\|z_{i,t} -\gamma_t\mu_{i,t}\|^2 +2\alpha_t \left(z_{i,t} -\gamma_t\mu_{i,t}\right)^{\top} (\mu_{i,t}-\lambda)\right).\label{lemma_Lt_second_eq4}
		\end{align}
		Since $L_t(x,\lambda)$ is concave with respect to $\lambda$, combining Assumptions \ref{AS4.2}-\ref{AS4.3} and (\ref{lemma_Lt_second_eq2}) yields
		\begin{align}
			&\sum_{i=1}^N \left(z_{i,t} -\gamma_t\mu_{i,t}\right)^{\top} (\mu_{i,t}-\lambda)\nonumber\\
			=& \sum_{i=1}^N \left(z_{i,t} -\gamma_t\mu_{i,t}\right)^{\top} (\bar{\lambda}_t-\lambda)+\sum_{i=1}^N \left(z_{i,t} -\gamma_t\mu_{i,t}\right)^{\top} (\mu_{i,t}-\bar{\lambda}_t) \nonumber\\
			=& N\left(\frac{1}{N} \sum_{i=1}^N z_{i,t}- \gamma_t \frac{1}{N} \sum_{i=1}^N\mu_{i,t} \right)^{\top}(\bar{\lambda}_t-\lambda)+\sum_{i=1}^N \left(z_{i,t} -\gamma_t\mu_{i,t}\right)^{\top} (\mu_{i,t}-\bar{\lambda}_t) \nonumber \\
			=& N\left( \sum_{i=1}^N g_{i,t}(x_{i,t})- \gamma_t \bar{\lambda}_t \right)^{\top}(\bar{\lambda}_t-\lambda)+\sum_{i=1}^N \left(z_{i,t} -\gamma_t\mu_{i,t}\right)^{\top} (\mu_{i,t}-\bar{\lambda}_t)\nonumber \\
			=& N \nabla_{\lambda} L_t(x_t, \bar{\lambda}_t)^{\top}(\bar{\lambda}_t-\lambda)+\sum_{i=1}^N \left(z_{i,t} -\gamma_t\mu_{i,t}\right)^{\top} (\mu_{i,t}-\bar{\lambda}_t)\nonumber \\
			\leq & N\left[	L_t(x_t, \bar{\lambda}_t)  -L_t(x_t, \lambda) \right] +2C\sum_{i=1}^N \|\mu_{i,t}-\bar{\lambda}_t\|. \label{lemma_Lt_second_eq5}
		\end{align}
		Combining (\ref{lemma_Lt_second_eq3}), (\ref{lemma_Lt_second_eq4}), and (\ref{lemma_Lt_second_eq5}) yields (\ref{lemma_Lt_second_eq1}).
	\end{proof}
	
	\begin{theorem}\label{4theorem_regret}
		Suppose Assumptions \ref{AS4.1}-\ref{AS4.3} hold. For the sequences $\{x_{i,t}\}$ and $\{\lambda_{i,t}\}$ generated by  Algorithm \ref{algo2}, $t = 1, \dots, T$, $i=1,\dots, N$, if $\alpha_t=t^{-\kappa_1}$, $\gamma_t=t^{-\kappa_2}$, $0<\kappa_1<\min\{2\kappa_2, 1-2\kappa_2\}$, $\kappa_2\in (0,\frac{1}{2})$, then we have
		\begin{align}
			Reg_T^D &= \mathcal{O}\left(T^{\max\{1-\kappa_1, 1-2\kappa_2+\kappa_1, 2\kappa_2+\kappa_1\}} +T^{\max\{\kappa_1, \kappa_2\}}P_T \right), \label{regret_order}\\
			Vio_T &= \mathcal{O}\left(T^{\max\left\{1-\frac{\kappa_2}{2}, \frac{1}{2}+\frac{\kappa_1}{2}\right\}} +T^{\max\{\frac{1}{2}, \frac{1}{2}-\frac{\kappa_2}{2}+ \frac{\kappa_1}{2}\}} \sqrt{P_T} \right)\label{violation_order}.
		\end{align}
	\end{theorem}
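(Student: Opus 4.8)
The plan is to combine the two one-step estimates of Lemmas~\ref{lemma_Lt_first} and~\ref{lemma_Lt_second} with the saddle-point structure of the regularized Lagrangian~\eqref{regularized_LF}, sum over $t$, and substitute the step-size schedules. Adding \eqref{lemma_Lt_first_eq1} and \eqref{lemma_Lt_second_eq1}, and using that $x_t^*\in\bar{\Omega}_t$ forces $g_t(x_t^*)\le\mathbf{0}_n$ (so $\bar{\lambda}_t^{\top}g_t(x_t^*)\le 0$, hence $L_t(x_t^*,\bar{\lambda}_t)\le f_t(x_t^*)$), together with $L_t(x_t,\lambda)=f_t(x_t)+\lambda^{\top}g_t(x_t)-\tfrac{\gamma_t}{2}\|\lambda\|^2$, one gets, for every $\lambda\in\mathbb{R}_+^n$,
\begin{equation}\label{pp:master}
f_t(x_t)-f_t(x_t^*)+\lambda^{\top}g_t(x_t)-\tfrac{\gamma_t}{2}\|\lambda\|^2\le R_t+Q_t(\lambda),
\end{equation}
where $R_t$ is the ($\lambda$-free) right-hand side of \eqref{lemma_Lt_first_eq1} and $Q_t(\lambda)$ is the right-hand side of \eqref{lemma_Lt_second_eq1}. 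Summing \eqref{pp:master} over $t=1,\dots,T$ is the backbone of both bounds.

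\emph{Dynamic regret.} Put $\lambda=\mathbf{0}_n$ in \eqref{pp:master}; summed over $t$, the left-hand side is exactly $Reg_T^D$ (Definition~\ref{dynamci_regret}). I would then bound $\sum_{t=1}^T\bigl(R_t+Q_t(\mathbf{0}_n)\bigr)$ term by term: the two telescoping terms $\sum_t\tfrac{1}{2\alpha_t}\sum_i(\|x_{i,t}-x_{i,t}^*\|^2-\|x_{i,t+1}-x_{i,t+1}^*\|^2)$ and $\sum_t\tfrac{1}{2N\alpha_t}\sum_i(\|\mu_{i,t}\|^2-\|\mu_{i,t+1}\|^2)$ are handled by Abel summation using monotonicity of $1/\alpha_t$, the diameter bound of Assumption~\ref{AS4.2}, $\mu_{i,1}=\mathbf{0}_n$, and $\|\mu_{i,t}\|\le C/\gamma_t$ (Lemma~\ref{lambda_bounded}); the consensus-error sums $\sum_t\sum_i\|\mu_{i,t}-\bar{\lambda}_t\|$ are $\le D\sum_t\alpha_t$ by Lemma~\ref{lam_consensus}; the path-length term $\sum_t\sum_i\bigl(\tfrac{3\eta}{\alpha_t}+C_2(1+\|\mu_{i,t}\|)\bigr)\|x_{i,t}^*-x_{i,t+1}^*\|$ is bounded by pulling its coefficient out at $t=T$ (with $1/\alpha_t=t^{\kappa_1}$ and $\|\mu_{i,t}\|\le Ct^{\kappa_2}$) and using $\sum_t\sum_i\|x_{i,t}^*-x_{i,t+1}^*\|\le P_T$ up to a boundary term; finally $\sum_t\alpha_t\sum_i(1+\|\mu_{i,t}\|^2)$ and $\sum_t\tfrac{\alpha_t}{2N}\sum_i(2C^2+2\gamma_t^2\|\mu_{i,t}\|^2)$ are bounded via Lemma~\ref{lambda_bounded} and $\gamma_t^2\|\mu_{i,t}\|^2\le C^2$. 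Substituting $\alpha_t=t^{-\kappa_1}$, $\gamma_t=t^{-\kappa_2}$ and using $\sum_{t\le T}t^{-\rho}=\mathcal{O}(T^{1-\rho})$ for $\rho<1$ and $\sum_{t\le T}t^{\beta}=\mathcal{O}(T^{1+\beta})$ for $\beta>0$, the dominant exponents collect to \eqref{regret_order}.

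\emph{Constraint violation.} Keeping $\lambda$ free, summing \eqref{pp:master} over $t$, lower-bounding $\sum_t(f_t(x_t)-f_t(x_t^*))\ge -2NC_1T$ (Assumption~\ref{AS4.3}), and folding the two $\lambda$-quadratic contributions — $\tfrac12\bigl(\sum_t\gamma_t\bigr)\|\lambda\|^2$ from the regularizer and the $\mathcal{O}(\|\lambda\|^2/\alpha_T)$ piece left by the dual telescope — into one term $B\|\lambda\|^2$ with $B=\Theta\bigl(\sum_t\gamma_t\bigr)=\Theta(T^{1-\kappa_2})$ (here $\kappa_1<1-\kappa_2$ is used), I would obtain $\lambda^{\top}\sum_t g_t(x_t)\le R+B\|\lambda\|^2$ for all $\lambda\ge\mathbf{0}_n$, with $\lambda$-free remainder $R=\mathcal{O}\bigl(T+T^{\max\{\kappa_1,\kappa_2\}}P_T\bigr)$ (the linear $T$ from the slack $2NC_1T$ dominates the regret-order terms, since each of $1-\kappa_1$, $1-2\kappa_2+\kappa_1$, $2\kappa_2+\kappa_1$ is $<1$ under the hypothesis). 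Taking $\lambda=\theta\bigl[\sum_t g_t(x_t)\bigr]_+$ and optimizing over $\theta$ gives $\tfrac{1}{4B}\bigl\|[\sum_t g_t(x_t)]_+\bigr\|^2\le R$, whence $Vio_T\le 2\sqrt{BR}$; plugging in $B=\Theta(T^{1-\kappa_2})$ and $R=\mathcal{O}(T+T^{\max\{\kappa_1,\kappa_2\}}P_T)$ yields exactly \eqref{violation_order} (using $1-\tfrac{\kappa_2}{2}\ge\tfrac12+\tfrac{\kappa_1}{2}$, again from $\kappa_1<1-\kappa_2$).

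The main obstacle is the bookkeeping of the dual-magnitude terms. Because $\gamma_t\downarrow 0$, Lemma~\ref{lambda_bounded} only gives the growing bound $\|\mu_{i,t}\|\le C/\gamma_t$, so every occurrence of $\|\mu_{i,t}\|$ or $\|\mu_{i,t}\|^2$ — in the one-step descent inequality, in the gradient-squared remainder, and after Abel summation of the dual telescope — injects a power of $t$ into the sums, and one must verify that all resulting exponents stay strictly below one; this is precisely what $0<\kappa_1<\min\{2\kappa_2,\,1-2\kappa_2\}$ with $\kappa_2\in(0,\tfrac12)$ enforces, simultaneously keeping $\sum_t\alpha_t$, the dual-telescope exponents, and the gradient-squared exponents sublinear while leaving $\sum_t\gamma_t=\Theta(T^{1-\kappa_2})$ large enough for the $\lambda$-optimization step to be effective. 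A recurring secondary subtlety is the Abel-summation estimate for a non-increasing step size multiplying a telescoping nonnegative sequence, which must be combined with the uniform bounds of Assumption~\ref{AS4.2} and Lemma~\ref{lambda_bounded} rather than a naive telescoping.
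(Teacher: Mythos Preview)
Your proposal is correct and follows essentially the same route as the paper: add \eqref{lemma_Lt_first_eq1} and \eqref{lemma_Lt_second_eq1}, use $\bar{\lambda}_t^{\top}g_t(x_t^*)\le 0$ to pass from $L_t(x_t^*,\bar{\lambda}_t)$ to $f_t(x_t^*)$, sum over $t$, handle the primal and dual telescopes by Abel summation with Assumption~\ref{AS4.2} and Lemma~\ref{lambda_bounded}, bound the consensus error by Lemma~\ref{lam_consensus}, set $\lambda=\mathbf{0}_n$ for the regret, and maximize the concave quadratic in $\lambda$ for the violation after lower-bounding $Reg_T^D$ by a linear-in-$T$ term. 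The only cosmetic differences are that the paper uses $Reg_T^D\ge -2C_2\eta NT$ (via the gradient bound) rather than your $-2NC_1T$ (via the function bound), and the paper keeps the two $\|\lambda\|^2$ coefficients $\tfrac12\sum_t\gamma_t$ and $\tfrac12 T^{\kappa_1}$ separate instead of merging them into a single $B=\Theta(T^{1-\kappa_2})$; your observation that $\kappa_1<1-\kappa_2$ makes the latter dominated is exactly why the stated $\max$ in \eqref{violation_order} collapses.
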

	\begin{proof}
		From (\ref{lemma_Lt_first_eq1}) and (\ref{lemma_Lt_second_eq1}), it follows that, for any $\lambda\in \mathbb{R}_+^n$, we have
		\begin{align}
			&L_t(x_t, \lambda)- L_t(x_t^*, \bar{\lambda}_t) \nonumber \\
			\leq & \frac{1}{2\alpha_t}\sum_{i=1}^N \! \left(\|x_{i,t}\!-\! x_{i,t}^*\|^2 \!-\! \|x_{i,t+1}\!-\! x_{i,t+1}^*\|^2 \right)\! +\! 2\!\left(\eta C_2+ \frac{C}{N}\right)\! \sum_{i=1}^N \|\mu_{i,t}-\bar{\lambda}_t\| \nonumber \\
			&+ \sum_{i=1}^N \left(\frac{3\eta}{\alpha_t}+ C_2\left(1+ \|\mu_{i,t}\|  \right) \right)\|x_{i,t}^* - x_{i,t+1}^*\|+\alpha_t C_2^2\sum_{i=1}^N\left(1+ \|\mu_{i,t}\|^2\right) \nonumber\\
			& +\frac{1}{2N\alpha_t}\sum_{i=1}^N \left(\|\mu_{i,t}-\lambda\|^2 -\|\mu_{i,t+1}-\lambda\|^2 \right) + \frac{\alpha_t}{2N}\sum_{i=1}^N \left(2C^2+ 2\gamma_t^2\|\mu_{i,t}\|^2 \right)\label{theorem_regret_q1}.
		\end{align}
		By the definition of $L_t$ and $\bar{\lambda}_t^{\top} \sum_{i=1}^N g_{i,t}(x_{i,t}^*) \leq 0$, we have 
		\begin{align}
			& L_t(x_t, \lambda)- L_t(x_t^*, \bar{\lambda}_t) \nonumber\\
			=& \sum_{i=1}^N \left(f_{i,t}(x_{i,t}) - f_{i,t}(x_{i,t}^*) \right) +\lambda^{\top}\sum_{i=1}^N g_{i,t}(x_{i,t}) -\frac{\gamma_t}{2}\|\lambda\|^2 -\bar{\lambda}_t^{\top}\sum_{i=1}^Ng_{i,t}(x_{i,t}^*) +\frac{\gamma_t}{2}\|\bar{\lambda}_t\|^2 \nonumber \\
			\geq & \sum_{i=1}^N \left(f_{i,t}(x_{i,t}) - f_{i,t}(x_{i,t}^*) \right) +\lambda^{\top}\sum_{i=1}^N g_{i,t}(x_{i,t}) -\frac{\gamma_t}{2}\|\lambda\|^2  +\frac{\gamma_t}{2}\|\bar{\lambda}_t\|^2. \label{theorem_regret_q2}
		\end{align}
		Combining (\ref{theorem_regret_q1}) and (\ref{theorem_regret_q2}), we have
		\begin{align}
			&Reg_T^D  + \lambda^{\top}\sum_{t=1}^{T}\sum_{i=1}^N g_{i,t}(x_{i,t}) -\sum_{t=1}^{T}\frac{\gamma_t}{2}\|\lambda\|^2  +\sum_{t=1}^{T}\frac{\gamma_t}{2}\|\bar{\lambda}_t\|^2 \nonumber\\
			\leq & \sum_{t=1}^{T}\frac{1}{2\alpha_t}\sum_{i=1}^N \! \left(\|x_{i,t}\!-\! x_{i,t}^*\|^2 \!-\! \|x_{i,t+1}\!-\! x_{i,t+1}^*\|^2 \right)\! +\! 2\!\left(\!\!\eta C_2+\!\! \frac{C}{N}\right)\! \sum_{t=1}^{T}\sum_{i=1}^N \|\mu_{i,t}-\bar{\lambda}_t\| \nonumber \\
			&+ \sum_{t=1}^{T}\sum_{i=1}^N \left(\frac{3\eta}{\alpha_t}+ C_2\left(1+ \|\mu_{i,t}\|  \right) \right)\|x_{i,t}^* - x_{i,t+1}^*\|+\sum_{t=1}^{T}\alpha_t C_2^2\sum_{i=1}^N\left(1+ \|\mu_{i,t}\|^2\right) \nonumber\\
			& +\sum_{t=1}^{T}\frac{1}{2N\alpha_t}\sum_{i=1}^N \left(\|\mu_{i,t}-\lambda\|^2 -\|\mu_{i,t+1}-\lambda\|^2 \right) + \sum_{t=1}^{T}\frac{\alpha_t}{2N}\sum_{i=1}^N \left(2C^2+ 2\gamma_t^2\|\mu_{i,t}\|^2 \right)\label{theorem_regret_q3}.
		\end{align}
         Next, we analyze each term in (\ref{theorem_regret_q3}). First, we have
		\begin{equation}\label{theorem_regret_q4}
			\sum_{t=1}^{T} \alpha_t =\sum_{t=2}^{\top} \frac{1}{t^{\kappa_1}} +1 \leq \int_1^{\top} \frac{1}{t^{\kappa_1}}dt +1 \leq \frac{T^{1-\kappa_1}}{1-\kappa_1}.
		\end{equation}
		By the above equation and Lemma \ref{lam_consensus}, we have
		\begin{equation}\label{theorem_regret_q5}
			2 \left(\eta C_2+\frac{C}{N}\right) \sum_{t=1}^{T}\sum_{i=1}^N \|\mu_{i,t}-\bar{\lambda}_t\| \leq 2D\left(\eta C_2+\frac{C}{N}\right)\frac{T^{1-\kappa_1}}{1-\kappa_1}.
		\end{equation}
		From Lemma \ref{lambda_bounded}, it follows that 
		\begin{equation}\label{theorem_regret_q6}
		\sum_{t=1}^{T}\sum_{i=1}^N \left(\frac{3\eta}{\alpha_t}+ C_2\left(1+ \|\mu_{i,t}\|  \right) \right)\|x_{i,t}^* - x_{i,t+1}^*\| \leq \left[3\eta T^{\kappa_1} +C_2\left(1+CT^{\kappa_2} \right) \right]P_T.
		\end{equation}
		Combining (\ref{theorem_regret_q4}), Lemma \ref{lambda_bounded}, and $0<2\kappa_2-\kappa_1<1$ yields
		\begin{align}
			&\sum_{t=1}^{T}\alpha_t C_2^2\sum_{i=1}^N\left(1+ \|\mu_{i,t}\|^2\right) + \sum_{t=1}^{T}\frac{\alpha_t}{2N}\sum_{i=1}^N \left(2C^2+ 2\gamma_t^2\|\mu_{i,t}\|^2 \right) \nonumber\\
			\leq & \left(C_2^2 + \frac{2C^2}{N} \right)\frac{T^{1-\kappa_1}}{1-\kappa_1} + C_2^2 C^2 \sum_{t=1}^{T}  t^{2\kappa_2 -\kappa_1}\nonumber \\
			\leq &\left(C_2^2 + \frac{2C^2}{N} \right)\frac{T^{1-\kappa_1}}{1-\kappa_1} +\frac{C_2^2 C^2 T^{1-2\kappa_2+\kappa_1}}{1-2\kappa_2+\kappa_1}      .\label{theorem_regret_q7}
		\end{align}
		By Assumption \ref{AS4.2}, we have
		\begin{align}
		&\sum_{t=1}^{T}\frac{1}{2\alpha_t}\sum_{i=1}^N \! \left(\|x_{i,t}\!-\! x_{i,t}^*\|^2 \!-\! \|x_{i,t+1}\!-\! x_{i,t+1}^*\|^2 \right) \nonumber \\
		=& \sum_{i=1}^N \left[\frac{1}{2\alpha_1}\|x_{i,1} - x_{i,2}^*\|^2  -  \frac{1}{2\alpha_T}\|x_{i,T+1} - x_{i,T+1}^*\|^2 +\sum_{t=1}^{T-1}\!\left(\frac{1}{2\alpha_{t+1}}-\frac{1}{2\alpha_{t}} \right)\! \|x_{i,t+1}-x_{i,t+1}^*\|^2 \right] \nonumber \\
		\leq & 2N\eta^2 +2N\eta^2\sum_{t=1}^{T-1} \left(\frac{1}{2\alpha_{t+1}}-\frac{1}{2\alpha_{t}} \right) \nonumber\\
		\leq & 2N\eta^2T^{\kappa_1}.\label{theorem_regret_q8}
		\end{align}
		Similarly, by $\mu_{i,1}=0$, we have
		\begin{align}
			&\sum_{t=1}^{T}\frac{1}{2N\alpha_t}\sum_{i=1}^N \left(\|\mu_{i,t}-\lambda\|^2 -\|\mu_{i,t+1}-\lambda\|^2 \right) \nonumber\\
			\leq & \frac{1}{2N} \sum_{i=1}^N \left[\|\lambda\|^2+ \sum_{t=1}^{t-1} \left(\frac{1}{\alpha_{t+1}}-\frac{1}{\alpha_{t}} \right)\|\mu_{i,t+1}- \lambda \|^2 \right]  \nonumber \\
			\leq & \frac{T^{\kappa_1}}{2}\|\lambda\|^2+ \frac{1}{N}\sum_{t=1}^{T}\sum_{i=1}^N \left(\frac{1}{\alpha_{t+1}}-\frac{1}{\alpha_{t}} \right)\frac{C^2}{\gamma_t^2} \nonumber \\
			\leq & \frac{T^{\kappa_1}}{2}\|\lambda\|^2+ {C^2}{T^{2\kappa_2+\kappa_1}}.\label{theorem_regret_q9}
		\end{align}
		Substituting (\ref{theorem_regret_q5})–(\ref{theorem_regret_q9}) into (\ref{theorem_regret_q3}) yields
		\begin{align}
			&Reg_T^D  + \lambda^{\top}\sum_{t=1}^{T}\sum_{i=1}^N g_{i,t}(x_{i,t}) -\sum_{t=1}^{T}\frac{\gamma_t}{2}\|\lambda\|^2- \frac{T^{\kappa_1}}{2}\|\lambda\|^2 +\sum_{t=1}^{T}\frac{\gamma_t}{2}\|\bar{\lambda}_t\|^2 \nonumber\\
			\leq & \left[2D\left(\eta C_2+\frac{C}{N}\right)+C_2^2 + \frac{2C^2}{N} \right]\frac{T^{1-\kappa_1}}{1-\kappa_1} + \left[3\eta T^{\kappa_1} +C_2\left(1+CT^{\kappa_2} \right) \right]P_T+ 2N\eta^2T^{\kappa_1} \nonumber\\
			& +\frac{C_2^2 C^2 T^{1-2\kappa_2+\kappa_1}}{1-2\kappa_2+\kappa_1}   +{C^2}{T^{2\kappa_2+\kappa_1}}.\label{theorem_regret_q10}
		\end{align}
		Setting $\lambda=0$ in (\ref{theorem_regret_q10}), we obtain
		\begin{align*}
			Reg_T^D \leq&  \left[2D\left(\eta C_2+\frac{C}{N}\right)\!+C_2^2 + \frac{2C^2}{N} \right]\frac{T^{1-\kappa_1}}{1-\kappa_1} + \left[3\eta T^{\kappa_1} +C_2\left(1+CT^{\kappa_2} \right) \right]P_T+ 2N\eta^2T^{\kappa_1} \nonumber \\
			& +\frac{C_2^2 C^2 T^{1-2\kappa_2+\kappa_1}}{1-2\kappa_2+\kappa_1}   +{C^2}{T^{2\kappa_2+\kappa_1}}\nonumber .
		\end{align*}
		Hence, we obtain (\ref{regret_order}).
		
		Let $F(\lambda)= \lambda^{\top}\sum_{t=1}^{T}\sum_{i=1}^N g_{i,t}(x_{i,t}) -\sum_{t=1}^{T}\frac{\gamma_t}{2}\|\lambda\|^2- \frac{T^{\kappa_1}}{2}\|\lambda\|^2$. By (\ref{theorem_regret_q10}), we have
		\begin{align*}
			Reg_T^D + F(\lambda) \leq&  \left[2D\left(\eta C_2+\frac{C}{N}\right)\!+C_2^2 + \frac{2C^2}{N} \right]\frac{T^{1-\kappa_1}}{1-\kappa_1} + \left[3\eta T^{\kappa_1} +C_2\left(1+CT^{\kappa_2} \right) \right]P_T\nonumber \\
			& + 2N\eta^2T^{\kappa_1} +\frac{C_2^2 C^2 T^{1-2\kappa_2+\kappa_1}}{1-2\kappa_2+\kappa_1}   +{C^2}{T^{2\kappa_2+\kappa_1}}\nonumber .
		\end{align*}
		Since $F(\lambda)$ is concave with respect to $\lambda$, it has a maximum. Computing this maximum and substituting it into the above equation gives
			\begin{align}
			&Reg_T^D + \left[2\left(\sum_{t=1}^{T} \gamma_t +T^{\kappa_1} \right) \right]^{-1} \left\|\left[\sum_{t=1}^{T} \sum_{i=1}^N g_{i,t}(x_{i,t}) \right]_+ \right\|^2 \nonumber \\
			\leq&  \left[2D\left(\eta C_2+\frac{C}{N}\right)\!+C_2^2 + \frac{2C^2}{N} \right]\frac{T^{1-\kappa_1}}{1-\kappa_1} + \left[3\eta T^{\kappa_1} +C_2\left(1+CT^{\kappa_2} \right) \right]P_T\nonumber \\
			& + 2N\eta^2T^{\kappa_1} +\frac{C_2^2 C^2 T^{1-2\kappa_2+\kappa_1}}{1-2\kappa_2+\kappa_1}   +{C^2}{T^{2\kappa_2+\kappa_1}}\label{theorem_regret_q11} .
		\end{align}
		By Assumptions \ref{AS4.2}-\ref{AS4.3}, we have 
		$$Reg_T^D  \geq -\sum_{t=1}^{T} \sum_{i=1}^N C_2\|x_{i,t}-x_{i,t}^*\| \geq -2C_2\eta NT. $$
		Substituting the above equation into (\ref{theorem_regret_q11}) yields
		\begin{align*}
			 & \left\|\left[\sum_{t=1}^{T} \sum_{i=1}^N g_{i,t}(x_{i,t}) \right]_+ \right\|^2 \\
			 \leq & \bigg[ \left[2D\left(\eta C_2+\frac{C}{N}\right)\!+C_2^2 + \frac{2C^2}{N} \right]\frac{T^{1-\kappa_1}}{1-\kappa_1} + \left[3\eta T^{\kappa_1} +C_2\left(1+CT^{\kappa_2} \right) \right]P_T+ 2N\eta^2T^{\kappa_1} \\
			 &+\frac{C_2^2 C^2 T^{1-2\kappa_2+\kappa_1}}{1-2\kappa_2+\kappa_1}  +{C^2}{T^{2\kappa_2+\kappa_1}} +2C_2\eta NT\bigg] \left( \frac{2T^{1-\kappa_2}}{1-\kappa_2} + 2T^{\kappa_1} \right).
		\end{align*}
		Therefore, (\ref{violation_order}) holds.

	\end{proof}

\begin{remark}
	If $\alpha_t = \gamma_t = t^{-\frac{1}{4}}$, then the step size and regularization coefficient satisfy the conditions in Theorem \ref{4theorem_regret}, and we have $Reg_T^D =  \mathcal{O}\left(T^{\frac{3}{4}} +T^{\frac{1}{4}}P_T\right)$ and $Vio_T=\mathcal{O}\left(T^{\frac{7}{8}} +T^{\frac{1}{2}}\sqrt{P_T}\right) $. If the path length of the optimal solutions satisfies $P_T \leq \mathcal{O}(T^{\frac{3}{4}})$, then the dynamic regret and the constraint violation of Algorithm \ref{algo2} are both sublinear, i.e., $\lim_{T\rightarrow \infty} Reg_T^D/T=0$ and $\lim_{T\rightarrow \infty} Vio_T/T=0$.
\end{remark}

\section{Numerical experiments}

To assess the proposed distributed online algorithm for online economic dispatch, we carry out two numerical experiments: Section~\ref{sec:sim} considers synthetic datasets, while Section~\ref{sec:elec} focuses on real-world datasets involving the electricity price and demand data from five Australia states, i.e., NSW, QLD, SA, VIC, TAS. All experiments are implemented in MATLAB R2023b and executed on a computer equipped with AMD Ryzen 7 processor (1.80 GHz) and 16 GB of RAM.

\subsection{Synthetic data}
\label{sec:sim}

In this subsection, we consider an $N$-bus microgrid system following the setup in \cite{wangcong2024}. The microgrid is interconnected with neighboring microgrids as well as the main grid via energy routers. Each node contains a distributed generator and a local load, and each generator is equipped with an intelligent control unit that acts as a local controller. At each time $t$, generator $i$ is characterized by a time-varying cost function and time-varying constraint functions. The agents cooperate to optimize the following problem
\begin{equation}\nonumber
	\begin{aligned}
		\min _{x_i\in \Omega_i } &\sum_{i=1}^N a_{i,t} x_i^{\top} x_i + b_{i,t}^{\top}x_i,\\
		\text{s.t.}\,\, &  \sum_{i=1}^N c_{i,t} x_i^{\top} x_i + q_{i,t}^{\top} x_{i,t} + e_{i,t} \leq 0, \, t=1,\dots T,
	\end{aligned}
\end{equation}
where $x_i$ is the power output of generator $i$, $a_{i,t} x_i^{\top} x_i + b_{i,t}^{\top}x_i$ is the cost function of generator $i$ at time $t$, $\sum_{i=1}^N c_{i,t} x_i^{\top} x_i + q_{i,t}^{\top} x_{i,t} + e_{i,t} \leq 0$ is the global power-balance constraint. The initial coefficients $b_{i,1}$ and $q_{i,1}$ are drawn uniformly from $[0,10]^{d_i}$ and $[0,5]^{d_i}$, respectively; $a_{i,1}$, $c_{i,1}$, and $e_{i,1}$ are sampled uniformly from $[1,15]$, $[1,5]$, and $[-30,0]$, respectively. For $t>1$, the coefficients are given as $b_{i,t}=b_{i,t-1}+\epsilon_1$, $q_{i,t}=q_{i,t-1}+\epsilon_1$, $a_{i,t}=a_{i,t-1}+\epsilon_2$, $c_{i,t}=c_{i,t-1}+\epsilon_2$, $e_{i,t}=e_{i,t-1}+10\epsilon_2$, where $\epsilon_1$ is drawn uniformly from $[-0.05, 0.05]^{d_i}$ and $\epsilon_2$ uniformly from $[-0.05, 0.05]$. These time-varying coefficients capture fluctuation in prices and demand in the power market. Each generator's feasible set is given by $\Omega_i=\{ x_i | \|x_i\| \leq 300\}$. To model realistic distributed coordination, the communication network among generators is represented by three switching graphs, as shown in Figure \ref{figgraph}.
The resulting communication graph satisfies Assumption \ref{AS4.1} ($B=3$), ensuring that information can propagate across all nodes.

\begin{figure}[htbp]
	\centering	
	\subfigure %第一张子图
	{
		\centering          %子图居中
		\includegraphics[scale=0.34]{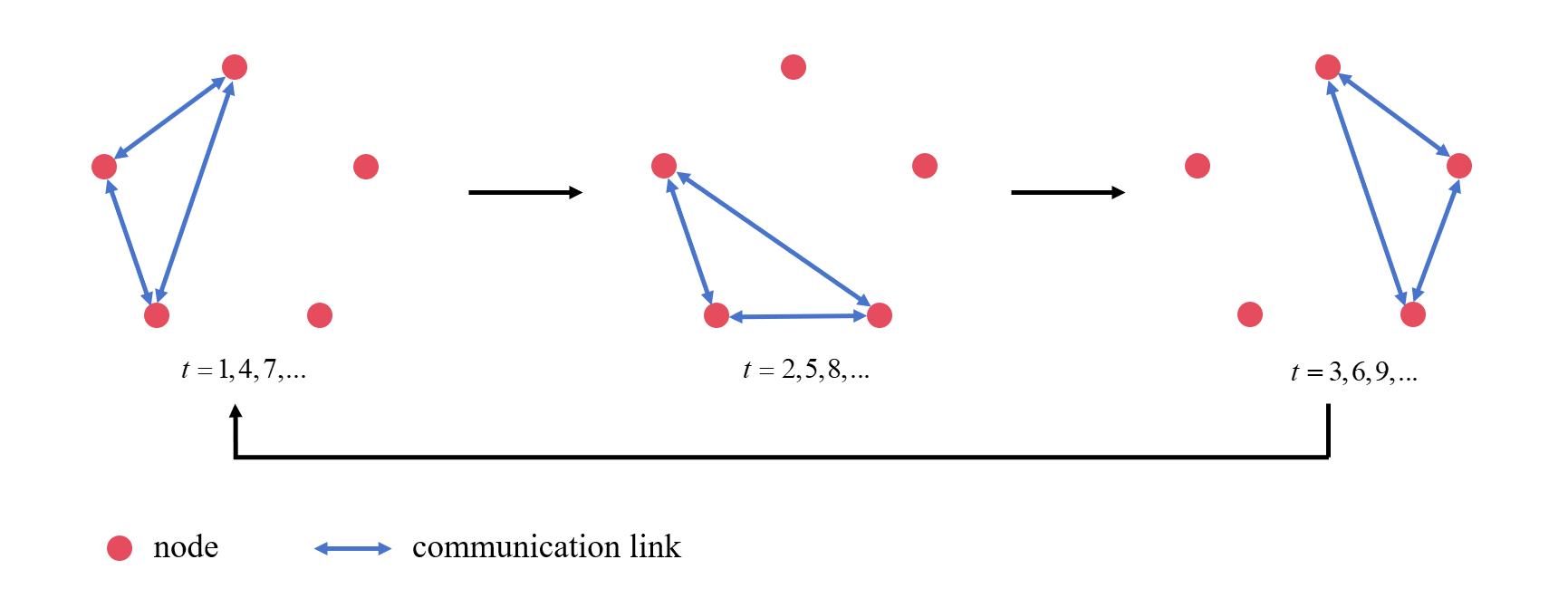}
		%	\caption{XX}
	}
	\caption{The communication network. } %  % 大图名称
	\label{figgraph}
\end{figure}

We first examine the effect of step sizes on the performance of the proposed algorithm. We consider three schemes: (1) $\alpha_t= \gamma_t= t^{-\frac{1}{4}}$, (2) $\alpha_t= \gamma_t = t^{-\frac{1}{8}}$, and (3) $\alpha_t = t^{-\frac{1}{3}}, \gamma_t=t^{-\frac{1}{4}}$. The variable dimension is fixed at $d_i=3$ for all $i=1,\dots, N$. We set $N = 5$ generators and run the algorithm for $T = 200$ time periods.
Figure \ref{fig1} shows the resulting dynamic regret and constraint violation under these different step sizes. All three choices lead to sublinear growth in both metrics, given that the step sizes satisfy the conditions established in Theorem \ref{4theorem_regret}. In particular, because Algorithm \ref{algo2} uses a gradient-tracking-like mechanism to track global coupled constraint functions, constraint violations diminish rapidly and approach zero. This behavior indicates that the proposed algorithm adapts efficiently to the time-varying coupled constraint and that subsequent iterates consistently satisfy the inequality constraints.

\begin{figure}[htbp]
	\centering	
	\subfigure[] %第一张子图
	{
		\centering          %子图居中
		\includegraphics[scale=0.5]{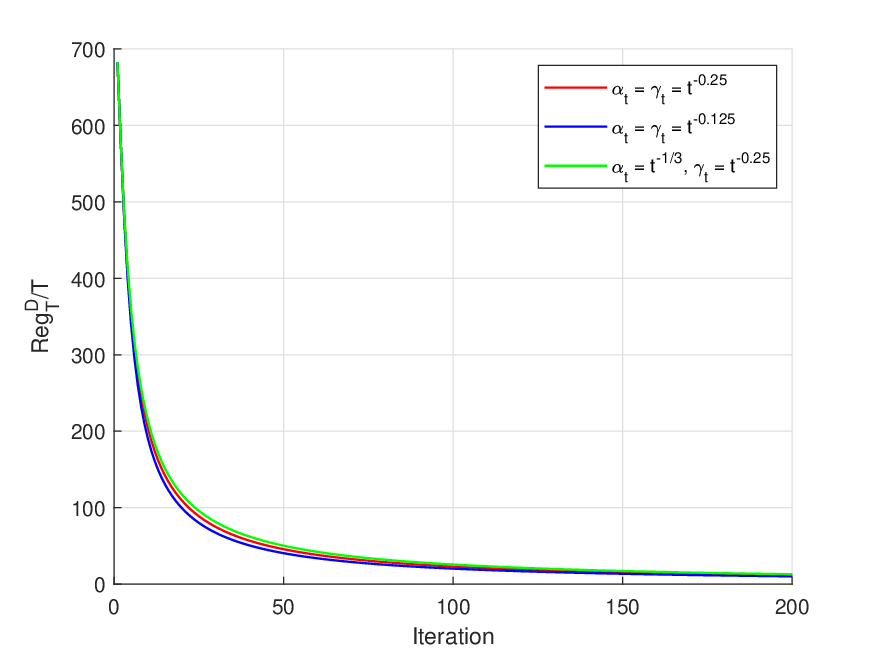}
		%	\caption{XX}
	}
	\subfigure[] %第一张子图
	{
		\centering          %子图居中
		\includegraphics[scale=0.5]{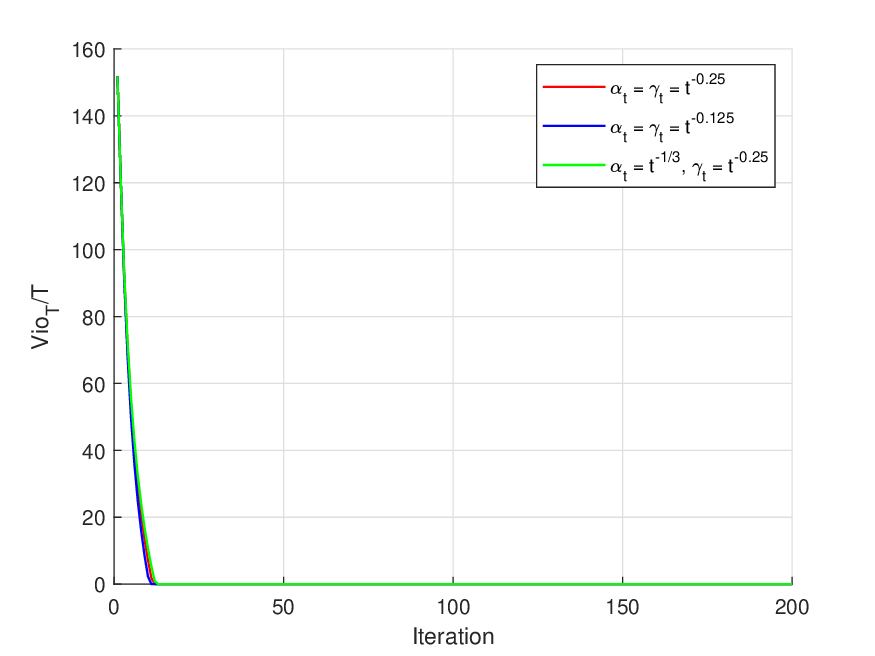}
		%	\caption{XX}
	}
	\caption{Results under different step sizes: (a) dynamic regret, (b) constraint violation.  } %  %大图名称
	\label{fig1}
\end{figure}

Then, we compare the proposed Algorithm \ref{algo2} with the benchmark algorithm in \cite{Lijueyou2022}. The benchmark algorithm uses the best step size recommended in their paper, while Algorithm \ref{algo2} uses $\alpha_t = \gamma_t = t^{-\frac{1}{4}}$. As shown in Figure \ref{fig3}, although both algorithms achieve sublinear dynamic regret and constraint violation, Algorithm \ref{algo2} provides superior performance in terms of both dynamic regret and constraint violation. Moreover, the benchmark algorithm in \cite{Lijueyou2022} relies on prior knowledge (specifically, a global upper bound on the gradients of all nodes' objective functions over time), whereas our algorithm requires no such assumption, and is therefore more applicable in a wider range of settings.

\begin{figure}[htbp]
	\centering	
	\subfigure[] %第一张子图
	{
		\centering          %子图居中
		\includegraphics[scale=0.5]{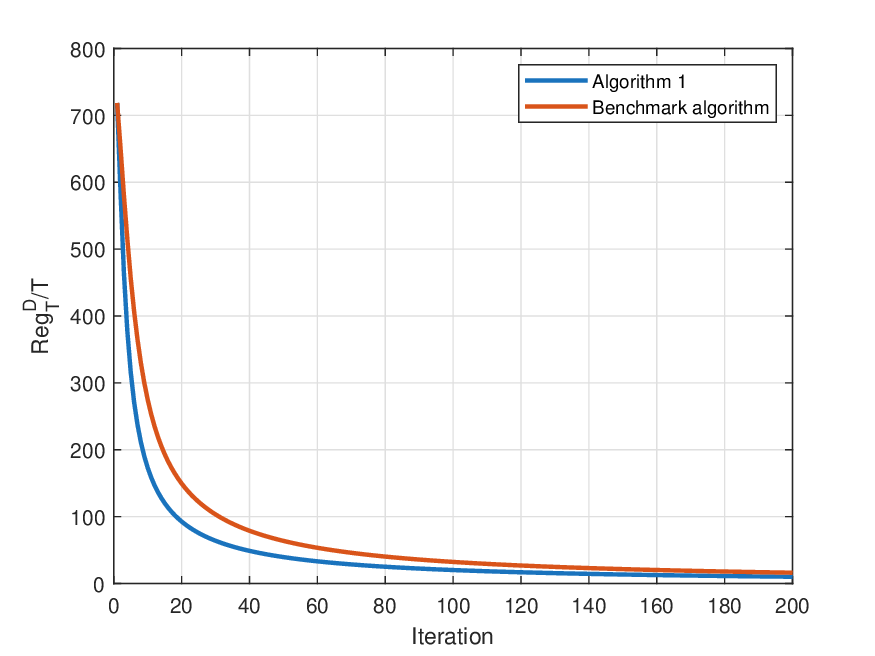}
		%	\caption{XX}
	}
	\subfigure[] %第一张子图
	{
		\centering          %子图居中
		\includegraphics[scale=0.5]{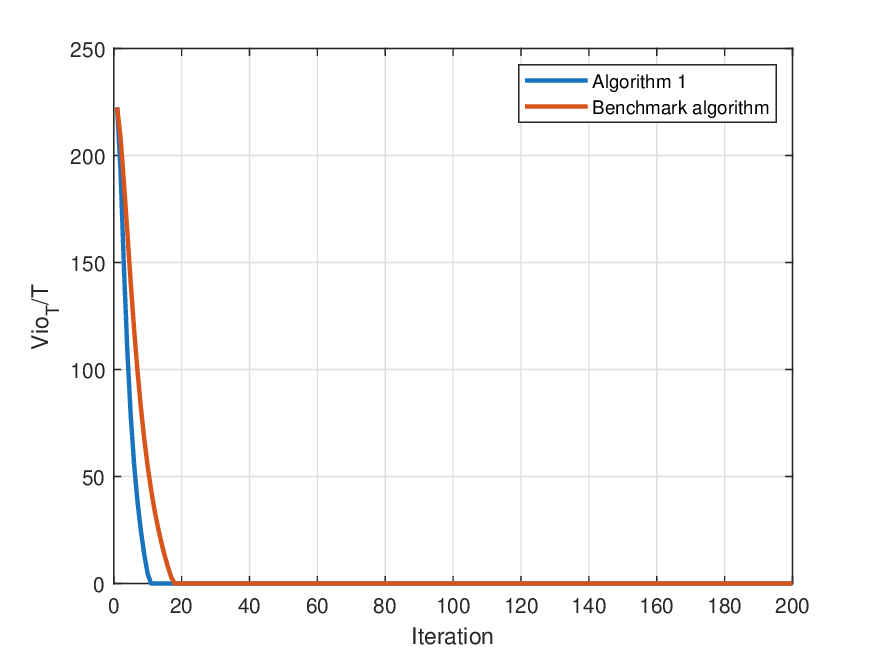}
		%	\caption{XX}
	}
	\caption{Experimental results comparing with the benchmark algorithm in \cite{Lijueyou2022}: (a) dynamic regret, (b) constraint violation.  } %  %大图名称
	\label{fig3}
\end{figure}

\subsection{Australian electricity data}
\label{sec:elec}

In this subsection, we study the distributed online economic dispatch problem formulated in (\ref{online_dispatch}), which considers the objective of minimizing generation costs while satisfying time-varying demand constraints. The experimental data are obtained from publicly available real-time electricity price (\$/MWh) and demand (MW) records provided by the Australian Energy Market Operator (AEMO) for five states, including NSW, QLD, SA, VIC, and TAS (\url{https://www.aemo.com.au/}). Each dataset spans the period from 00:00 on July 5, 2024 to 23:55 on July 14, 2024, with observations recorded at $5$-minute intervals, resulting in a total of $2880$ data points ($T=2880$). The corresponding time series for electricity prices and demand across the five states are shown in Figure \ref{time_series}. In this setting, $P_t$ and $D_t$ denote the time-varying electricity price and demand, respectively.

In practical economic dispatch problems, operational decisions are typically made following a forecast–then–optimize (PTO) paradigm, in which electricity prices and demand are first forecast based on historical information \citep[e.g.,][]{Panagiotelis2008,Fan2011,Jeon2019} and the resulting forecasts are then used to determine generator outputs by solving an optimization problem that minimizes operating costs while ensuring supply–demand balance. In this paper, our focus is on the decision-making stage given forecasted inputs, rather than on the forecasting task itself. To isolate the performance of the proposed distributed online optimization algorithm and avoid confounding effects from forecasting errors, we assume that the electricity price and demand available at each time $t$ are provided as ``ideal" one-step-ahead forecasts generated from historical data up to time $t-1$. Accordingly, these values are treated as the forecast inputs to the dispatch problem, and all subsequent numerical results are based on decisions computed using these values.

\begin{figure}[htbp]
  \centering
  \includegraphics[scale=0.43]{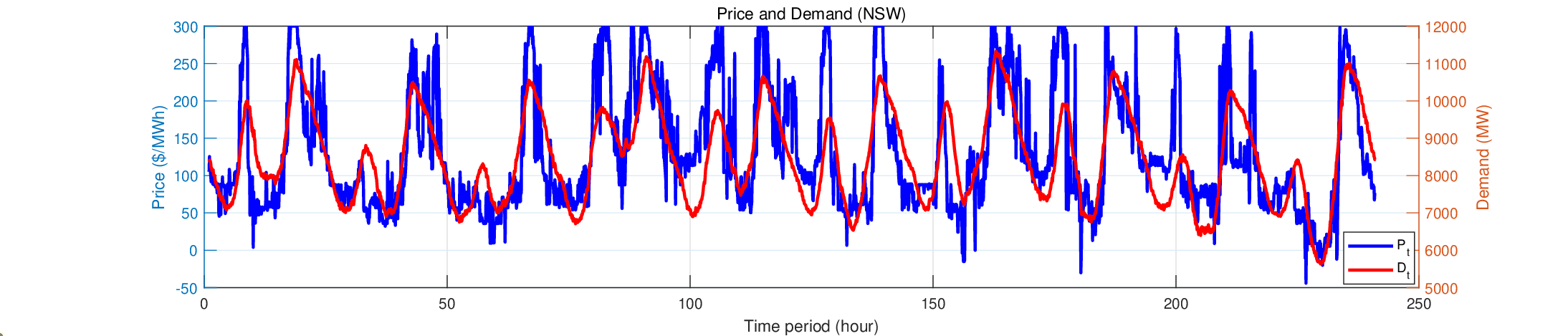}\par\vspace{2mm}
  \includegraphics[scale=0.43]{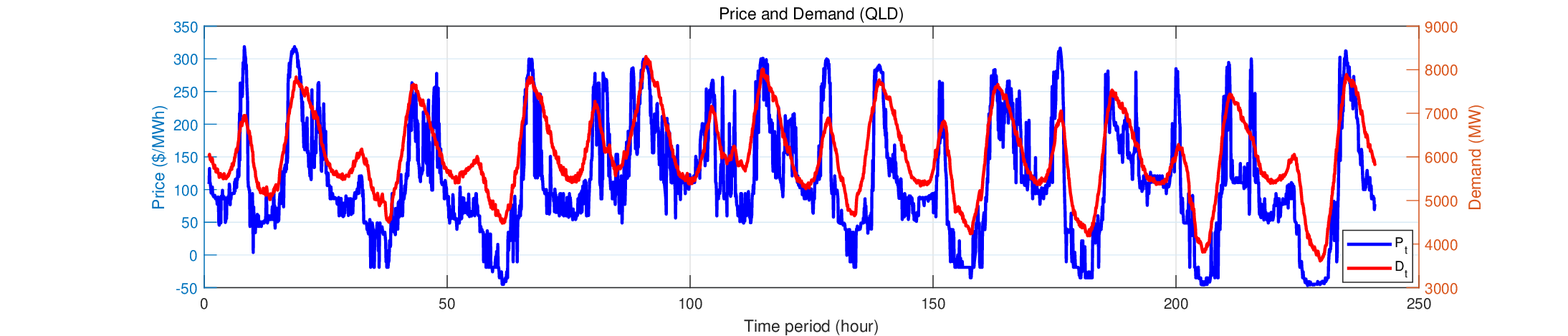}\par\vspace{2mm}
  \includegraphics[scale=0.43]{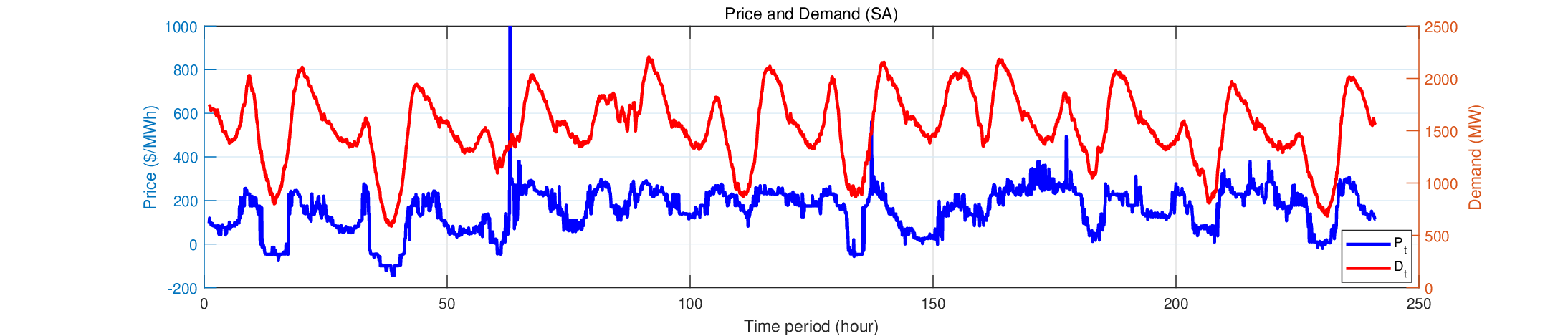}\par\vspace{2mm}
  \includegraphics[scale=0.43]{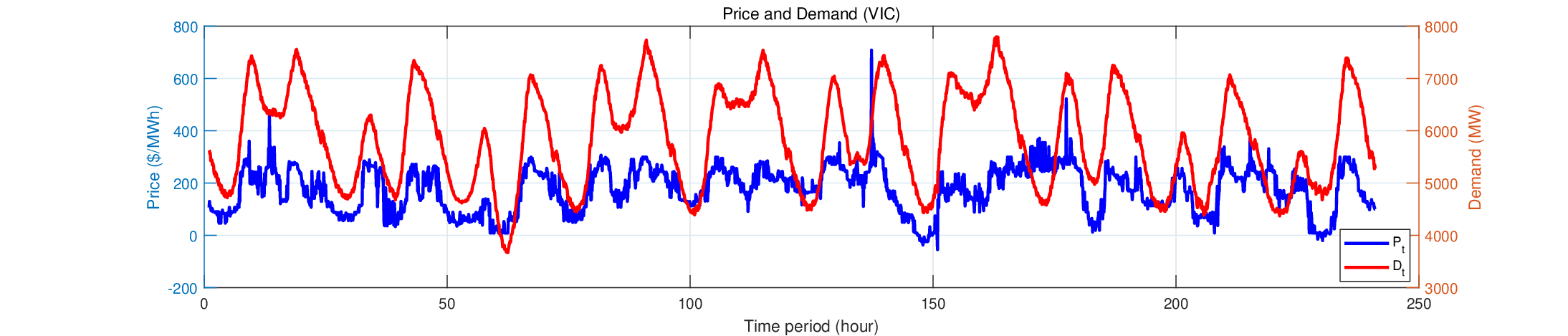}\par\vspace{2mm}
  \includegraphics[scale=0.43]{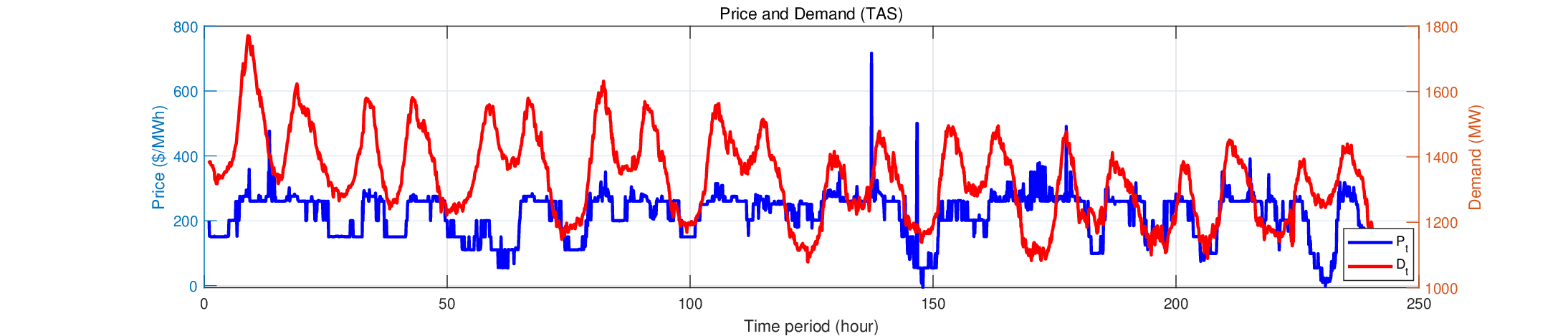}
   \caption{The original data, from top to bottom: NSW, QLD, SA, VIC, TAS.}
  \label{time_series}
\end{figure}

In this experiment, we consider a system with $5$ generators. Due to the difficulty in obtaining generator cost data at every time instant, we adopt time-invariant cost coefficients, i.e., $a_{i,t}=a_i, b_{i,t}= b_i, c_{i,t}=c_i$, $t=1, \dots, T$, $i=1, \dots, N$. Each generator is modeled with a quadratic cost function, with coefficients $a=[0.040; 0.050; 0.035; 0.045; 0.038]$, $b=[-0.1200; -0.1500;$ $ -0.1050; -0.1350; -0.1140]$, $c=[100; 110; 95; 105; 98]$. The communication topology is also given in Figure \ref{figgraph}.

We compare the performance of Algorithm \ref{algo2} with the benchmark algorithm in \cite{Lijueyou2022}.
The benchmark algorithm relies on a fixed step size that incorporates prior knowledge, which is often hard to obtain in advance. In contrast, our proposed algorithm does not depend on such prior information. As long as the algorithm's step size satisfies the conditions stated in Theorem \ref{4theorem_regret}, our algorithm is feasible. Furthermore, as shown in Figure \ref{fig1}, the performance of the proposed algorithm is insensitive to the choice of step sizes. Therefore, our Algorithm \ref{algo2} is more convenient to use in practice.

We set the step size as $\alpha_t= \gamma_t= t^{-\frac{1}{4}}$. Table \ref{tab:comp_time} reports the computation time for each state over the full horizon $T=2880$. The results show that the proposed algorithm remains computationally efficient even for relatively large datasets, as it processes data in an online manner and updates decisions using only information from the current and immediately preceding time steps, without storing the full history.

Figures \ref{figNSW}-\ref{figTAS} present the resulting dynamic regret and constraint violation over time for the five states. The results indicate that both metrics grow sublinearly, in the sense that $Reg_T^D /T$ and $Vio_T /T$ converge to zero as $T \rightarrow \infty$. This indicates that the algorithm is able to effectively track the time-varying optimal solution while maintaining feasibility on average, highlighting its practicality and usefulness for distributed economic dispatch. Moreover, compared with the benchmark algorithm, Algorithm \ref{algo2} achieves faster convergence in both dynamic regret and constraint violation, underscoring its practical advantages.

As shown in Figures \ref{figNSW}-\ref{figTAS}, both the dynamic regret and the constraint violation exhibit noticeable fluctuations at the initial stage. This behavior arises because the initial decision variables are chosen randomly and are not close to the optimal solutions, preventing the algorithm from immediately tracking the optimum and requiring an initial adaptation period. In addition, numerous smaller fluctuations in dynamic regret and constraint violation can be observed throughout the time horizon. These fluctuations are primarily driven by demand peaks and troughs that induce abrupt changes in electricity demand and prices, thereby increasing tracking errors and temporarily elevating both regret and constraint violations.

\begin{table}[htbp]
\centering
\caption{Computation time of the proposed algorithm for each state.}
\label{tab:comp_time}
\begin{tabular}{lccccc}
\toprule
State & NSW & QLD & SA & VIC & TAS \\
\midrule
Computation time (s) & 45.54  & 38.85 & 43.22 & 39.01  & 39.85 \\
\bottomrule
\end{tabular}
\end{table}

\begin{figure}[htbp]
	\centering	
	\subfigure[] %第一张子图
	{
		\centering          %子图居中
		\includegraphics[scale=0.5]{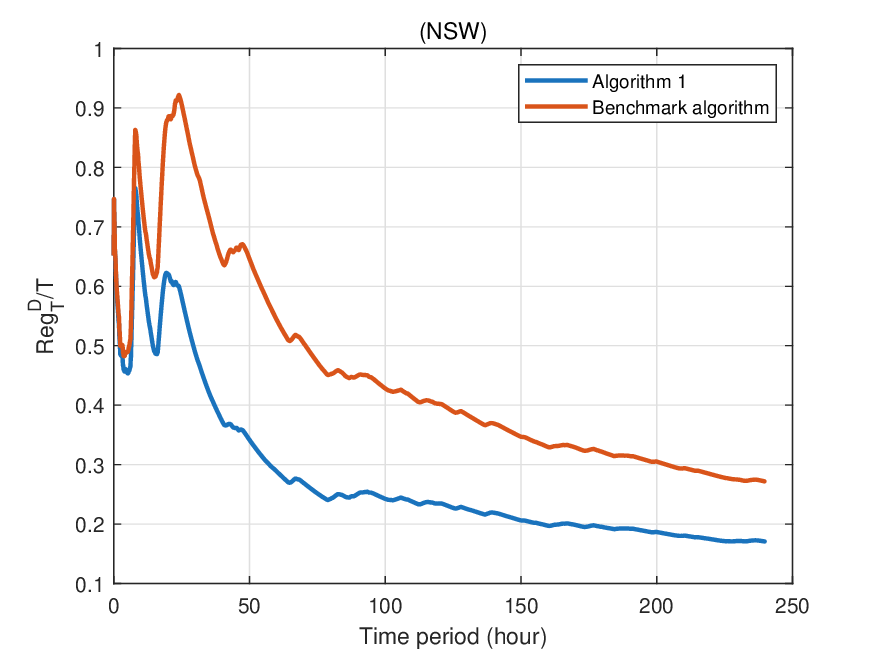}
		%	\caption{XX}
	}
    \subfigure[] %第一张子图
	{
		\centering          %子图居中
		\includegraphics[scale=0.5]{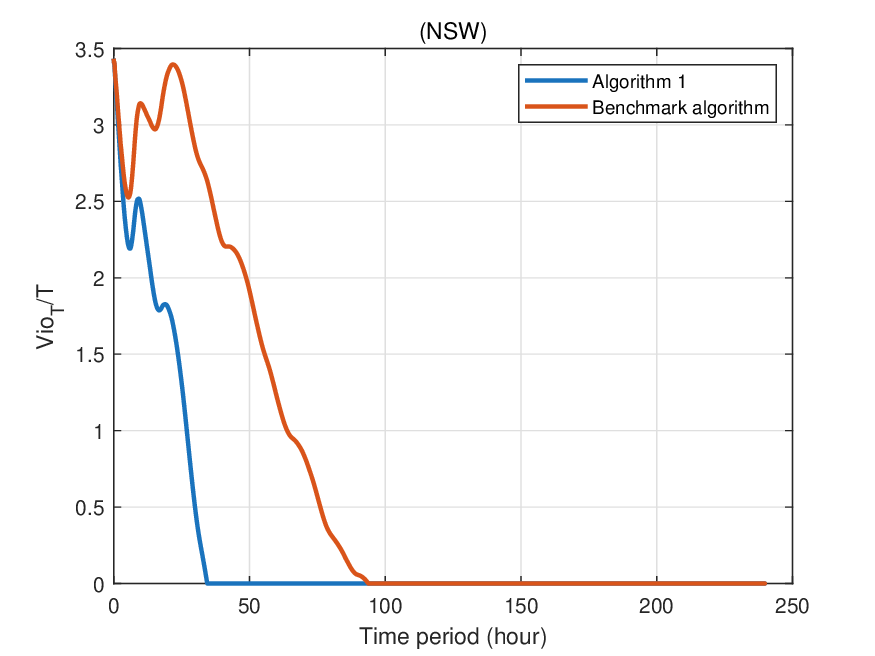}
		%	\caption{XX}
	}
	\caption{Experiment results from NSW: (a) dynamic regret, (b) constraint violation. } 
	\label{figNSW}
\end{figure}

\begin{figure}[htbp]
	\centering	
	\subfigure[] %第一张子图
	{
		\centering          %子图居中
		\includegraphics[scale=0.5]{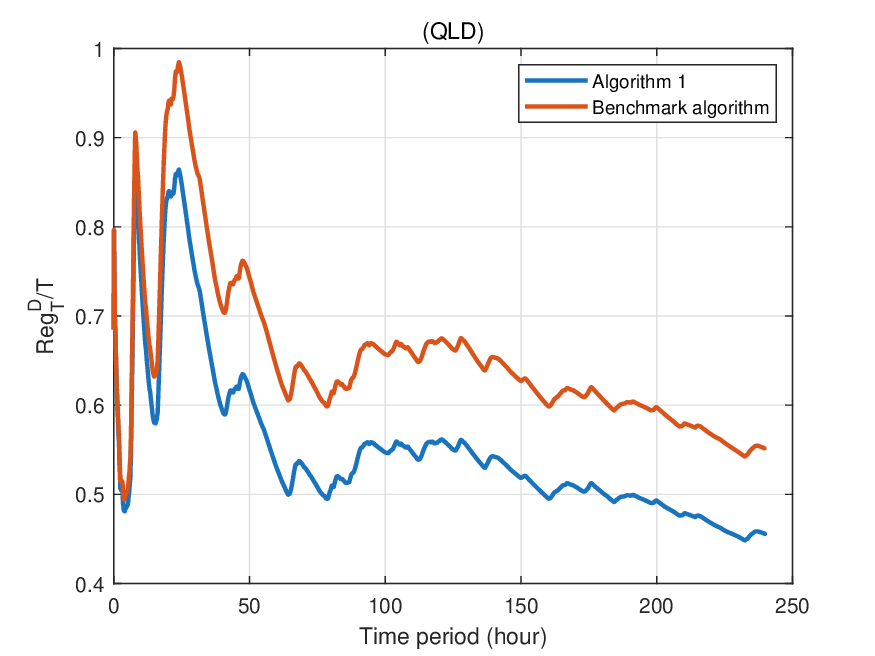}
		%	\caption{XX}
	}
    \subfigure[] %第一张子图
	{
		\centering          %子图居中
		\includegraphics[scale=0.5]{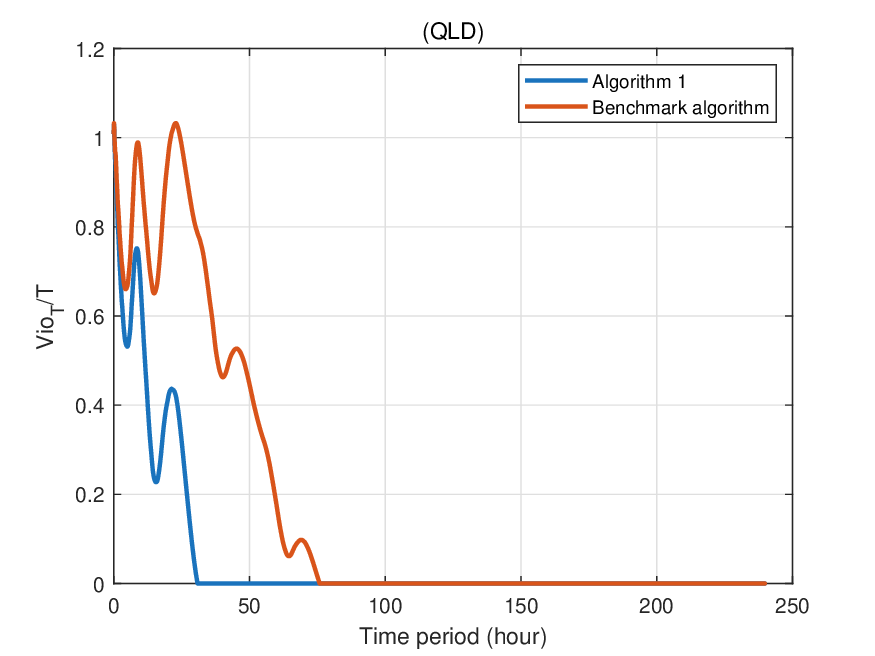}
		%	\caption{XX}
	}
	\caption{Experiment results from QLD: (a) dynamic regret, (b) constraint violation. } 
	\label{figQLD}
\end{figure}

\begin{figure}[htbp]
	\centering	
	\subfigure[] %第一张子图
	{
		\centering          %子图居中
		\includegraphics[scale=0.5]{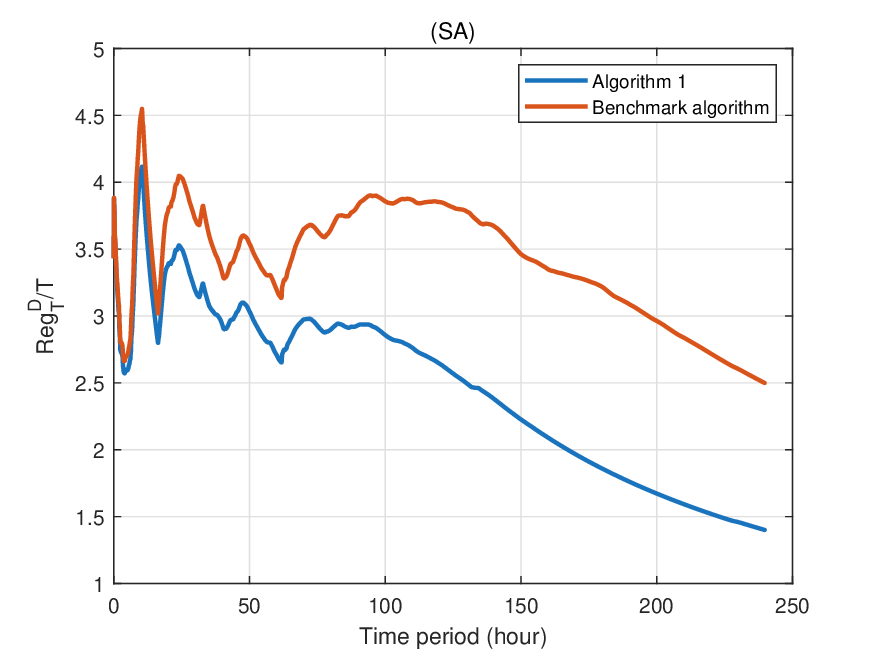}
		%	\caption{XX}
	}
    \subfigure[] %第一张子图
	{
		\centering          %子图居中
		\includegraphics[scale=0.5]{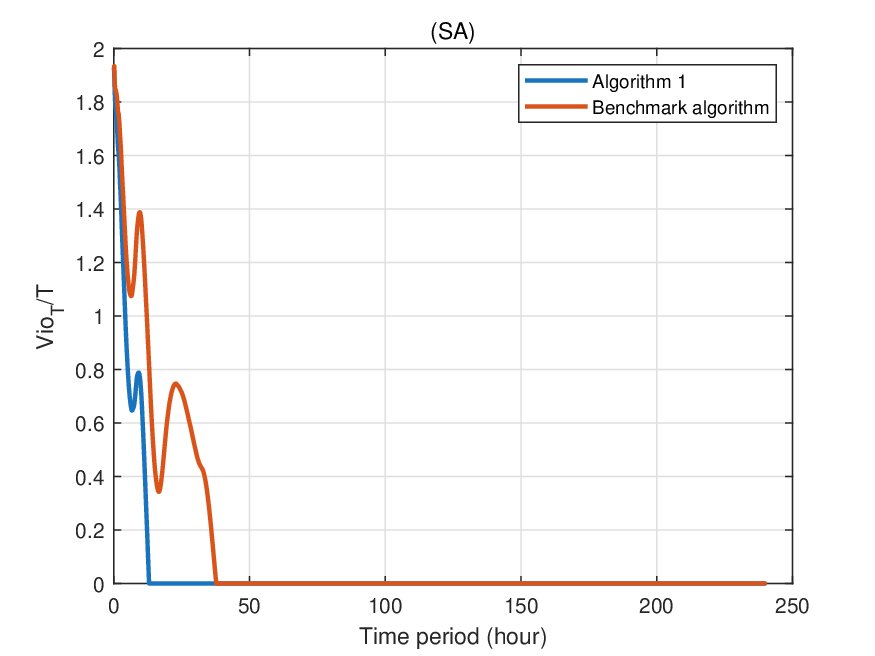}
		%	\caption{XX}
	}
	\caption{Experiment results from SA: (a) dynamic regret, (b) constraint violation. } 
	\label{figSA}
\end{figure}

\begin{figure}[htbp]
	\centering	
	\subfigure[] %第一张子图
	{
		\centering          %子图居中
		\includegraphics[scale=0.5]{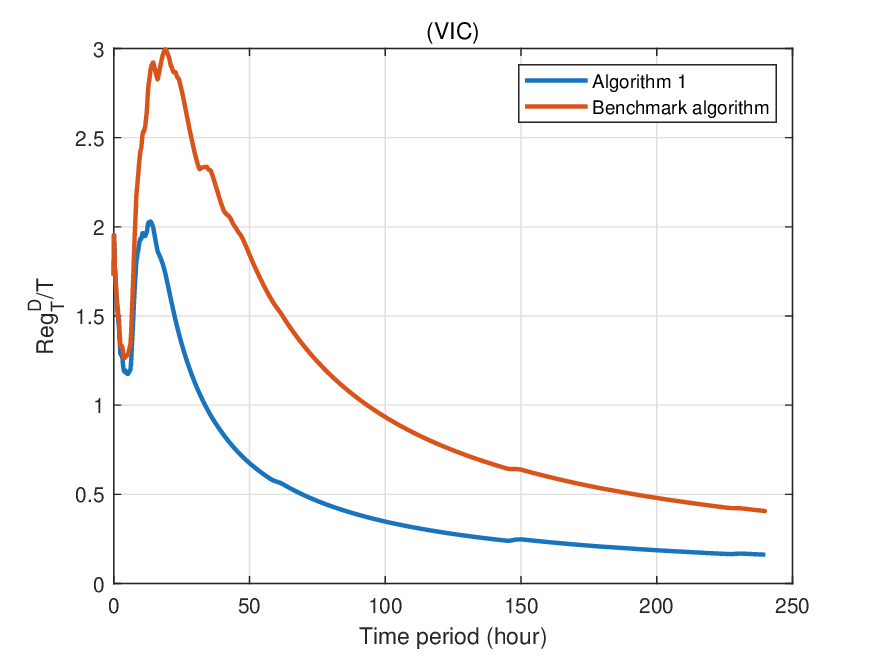}
		%	\caption{XX}
	}
    \subfigure[] %第一张子图
	{
		\centering          %子图居中
		\includegraphics[scale=0.5]{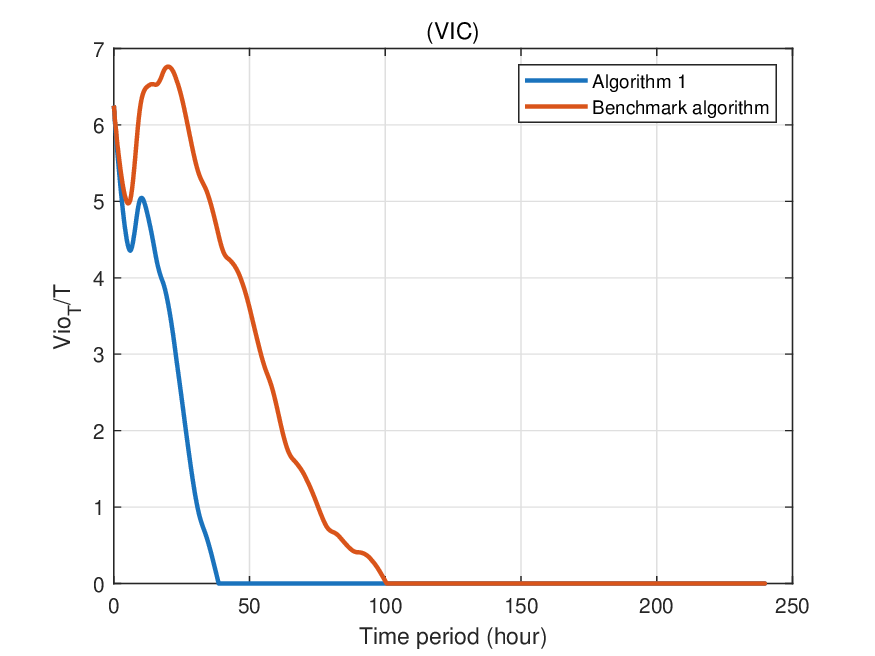}
		%	\caption{XX}
	}
	\caption{Experiment results from VIC: (a) dynamic regret, (c) constraint violation. } 
	\label{figVIC}
\end{figure}

\begin{figure}[htbp]
	\centering	
	\subfigure[] %第一张子图
	{
		\centering          %子图居中
		\includegraphics[scale=0.5]{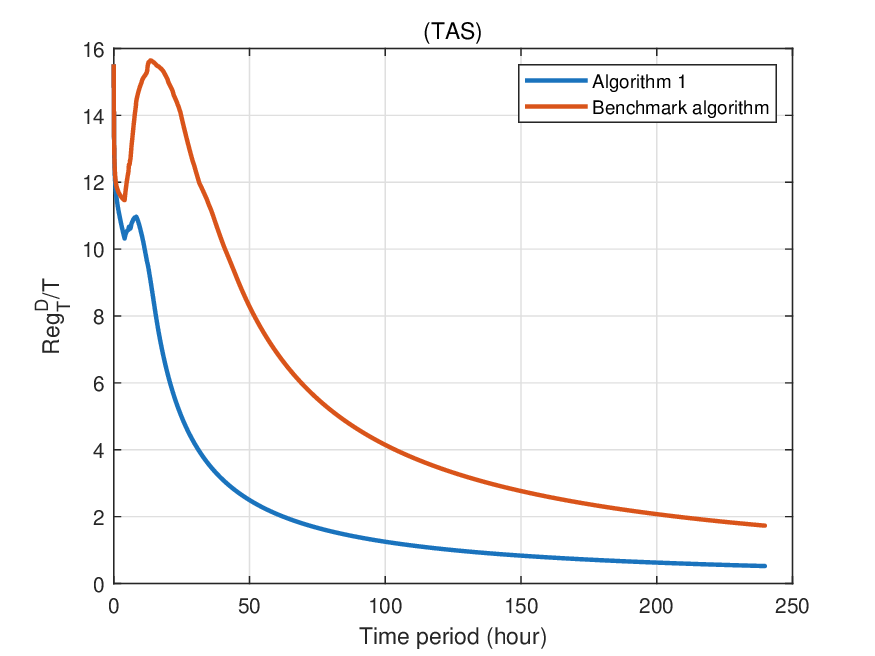}
		%	\caption{XX}
	}
    \subfigure[] %第一张子图
	{
		\centering          %子图居中
		\includegraphics[scale=0.5]{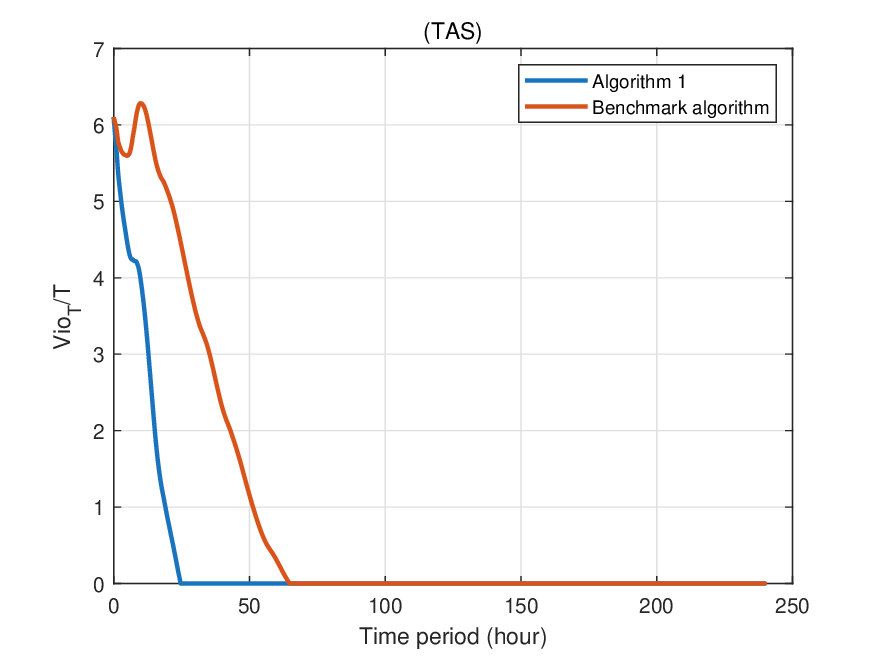}
		%	\caption{XX}
	}
	\caption{Experiment results from TAS: (a) dynamic regret, (c) constraint violation. } 
	\label{figTAS}
\end{figure}

	%%%%%%%%%%%%%%%%%%%%%%%%%%%%%%%%%%%%%%%%%%%%%%%%%%%%%%%%%%%%%%%%%%%%%%%%%
	\section{Conclusions}
	
	In this paper, we study the distributed online economic dispatch problem with time-varying coupled inequality constraints. By integrating a primal-dual framework with a constraint-tracking mechanism, we developed an efficient distributed online algorithm that enables each agent to accurately track the global constraint using only local information exchange. Under suitable assumptions, we established sublinear bounds on both the dynamic regret and the cumulative constraint violation, providing theoretical performance guarantees. Extensive numerical experiments on synthetic and real-world datasets further validated the theoretical findings and demonstrated that the proposed algorithm outperforms existing benchmark methods in terms of convergence behavior and practical efficiency. Several directions remain for future research, including extensions to more general communication graphs and the investigation of distributed online economic dispatch problem with coupled inequality constraints in stochastic and uncertain environments.
	
\section*{Acknowledgments}
	Tao Li was supported by the National Natural Science Foundation of China under Grant No. 62261136550 and the NYU-ECNU Institute of Mathematical Sciences, NYU Shanghai, Shanghai, China. Xiaoqian Wang was supported by the Presidential Foundation of the Academy of Mathematics and Systems Science, Chinese Academy of Sciences, China (No. E555930101).

\bibliographystyle{apalike}
\bibliography{OnlineEconomicDispatch}

\end{document}